\newtheorem{theorem}{Theorem}[section]
\newtheorem{lemma}[theorem]{Lemma}
\newtheorem{corollary}[theorem]{Corollary}
\newtheorem{remark}[theorem]{Remark}
\theoremstyle{definition}
\numberwithin{equation}{section}
\begin{document}

\title{Partial sums of the hyperharmonic series}

\author[author1]{Hongguang Wu}
\address{Department of Mathematics, Changzhou University, Changzhou 213164, China}
\email{whg@cczu.edu.cn}

\author[author2]{Jun Qiu}
\address{Department of Mathematics and Computer Science, Tongling University, Tongling, 244000, China}
\email{woshiqiujun@live.com}

\subjclass[2020]{11b75,11B83}
\date{\today }


\keywords{The least common multiple; Arithmetic progressions; Hyperharmonic series}

\begin{abstract}
	In \cite{ref2}, Erd\"{o}s and Niven proved that no two partial sums of the harmonic series are equal. In this paper, we extend this result by demonstrating that no two partial sums of the hyperharmonic series are equal
\end{abstract}

\maketitle

\section{Introduction}
In mathematical analysis, the harmonic series is a classical example of a divergent infinite series, defined as the sum of the reciprocals of the natural numbers:  
$$ H = \sum_{k=1}^{\infty} \frac{1}{k}. $$
Despite the monotonic decrease of its terms, the harmonic series diverges as the number of terms increases, illustrating that term-wise decay alone does not ensure convergence.

A generalized form of the harmonic series is given by  
$$ \sum_{n=1}^{\infty} \frac{1}{an + b}, $$  
where \( a \) and \( b \) are real numbers such that \( \frac{b}{a} \) is a positive integer.  
Another well-known generalization is the $p$-series (also referred to as the hyperharmonic series), defined as  
$$ \sum_{n=1}^{\infty} \frac{1}{n^p}, $$  
where \( p \) is a real number. When \( p = 1 \), the $p$-series reduces to the classical harmonic series.

In 1915, Theisinger \cite{LT} proved that the \( n \)-th harmonic sum  
$$1 + \frac{1}{2} + \cdots + \frac{1}{n}$$is never an integer for any \( n > 1 \).

In 1923, Nagell \cite{TN} extended Theisinger’s result to general arithmetic progressions, showing that the reciprocal sum  
$$\sum\limits_{i=0}^{n-1} \frac{1}{a + bi}$$  
is never an integer for \( n \geq 2 \), where \( a \) and \( b \) are positive integers.

Over two decades later, Erd\"{o}s and Niven \cite{ref2} further generalized Nagell’s theorem by proving that none of the elementary symmetric functions of  
$$\frac{1}{a}, \frac{1}{a + b}, \ldots, \frac{1}{a + (n-1)b}$$
can be an integer, under the same conditions.

Building upon the result of Erd\"{o}s and Niven, several researchers have obtained further significant developments in recent years. In particular, Chen and Tang \cite{YC} demonstrated that for \( n \geq 4 \), none of the elementary symmetric functions of the sequence $$1, \frac{1}{2}, \ldots, \frac{1}{n} $$is an integer.

In \cite{CW}, Wang and Hong investigated the case where \( n \geq 2 \) and proved that none of the elementary symmetric functions of the sequence $$1, \frac{1}{3}, \ldots, \frac{1}{2n - 1}$$ is an integer. In a subsequent work \cite{CW2}, they further showed that for positive integers \( a, b, n, k \) with \( 1 \leq k \leq n \), the quantity  
\[
S_{a,b}(n, k) = \sum_{0 \leq i_1 \leq \cdots \leq i_k \leq n-1} \prod_{j=1}^{k} \frac{1}{a i_j + b}
\]  
is not an integer, except in the two exceptional cases: \( b = n = k = 1 \), or \( a = b = 1 \), \( n = 3 \), and \( k = 2 \), where \( S_{a,b}(n, k) \) attains an integer value.  
Feng et al. \cite{YL} further extended Nagell’s theorem by proving that for \( n \geq 2 \) and any sequence of positive integers \( s_1, \ldots, s_n \), the reciprocal power sum  
\[
\sum_{k=1}^{n} \frac{1}{(a + (k - 1)b)^{s_k}}
\]  
is never an integer.

In \cite{ref2}, Erd\"{o}s and Niven also proved that no two partial sums of the harmonic series can be equal; that is, it is impossible for  
\[
\frac{1}{m} + \frac{1}{m+1} + \cdots + \frac{1}{n} = \frac{1}{x} + \frac{1}{x+1} + \cdots + \frac{1}{y}
\]  
to hold for distinct intervals.  
 
In this paper, we extend the result of Erd\"{o}s and Niven to the 2-series. Our main theorem establishes the analogous non-equivalence of partial sums in this generalized setting.

\begin{theorem}\label{1.2}(Theorem \ref{mainTH})
	No two partial sums of the 2-series are equal; that is, there do not exist integers \( m \leq n \) and \( x \leq y \), with \( (m, n) \neq (x, y) \), such that
	\[
	\frac{1}{m^2} + \frac{1}{(m+1)^2} + \cdots + \frac{1}{n^2} = \frac{1}{x^2} + \frac{1}{(x+1)^2} + \cdots + \frac{1}{y^2}.
	\]
\end{theorem}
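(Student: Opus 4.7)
My approach is to reduce to disjoint intervals, apply a $2$-adic valuation argument to the combined range $[m,y]$, and handle the remaining ``gap case'' by a more delicate iterated $2$-adic analysis.

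\emph{Reduction.} Suppose for contradiction that $\sum_{k=m}^n k^{-2} = \sum_{k=x}^y k^{-2}$ with $(m,n)\neq(x,y)$; by symmetry I may take $m\le x$, and monotonicity rules out $m=x$ as well as the case that one interval contains the other. Cancelling any common terms, I obtain disjoint intervals $[m,n]$ and $[x,y]$ with $n<x$ still satisfying the identity.

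\emph{The easy $2$-adic case.} In any interval of at least two consecutive positive integers, the element of maximum $2$-adic valuation is unique: if $v_2(i)=v_2(j)=e$ for $i<j$, then $i+2^e$ lies strictly between them and has $v_2\ge e+1$. Let $k^{**}\in[m,y]$ be this unique element, with $v_2(k^{**})=\alpha$. If $k^{**}\in[m,n]$, the summand $1/(k^{**})^2$ has strictly smaller $2$-adic valuation than every other term on either side, so $v_2\bigl(\sum_{k=m}^n k^{-2}\bigr)=-2\alpha$ while $v_2\bigl(\sum_{k=x}^y k^{-2}\bigr)\ge 2-2\alpha$; the two sums cannot agree. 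The case $k^{**}\in[x,y]$ is symmetric.

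\emph{The gap case, and the main obstacle.} The remaining possibility is $n<k^{**}<x$. Since $k^{**}\pm 2^{\alpha}$ would have $v_2>\alpha$, neither lies in $[m,y]$, so both subintervals are confined to the single zone $(k^{**}-2^\alpha,\,k^{**}+2^\alpha)$. Writing $L=\operatorname{lcm}(m,\ldots,y)$ (so $v_2(L)=\alpha$), the identity $S=T$ becomes the integer identity $L^2(S-T)=0$. Reducing modulo $16$ isolates the contribution of the valuation-$(\alpha-1)$ elements---there are only the two candidate positions $k^{**}\pm 2^{\alpha-1}$ in the zone, one on each side of $k^{**}$---and forces $[m,n]$ and $[x,y]$ to contain the same number of them. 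This leaves two subcases: either both sub-maximal points sit inside their respective subintervals (a symmetric placement about $k^{**}$) or both lie in the gap (tightening the inclusions further). In each subcase I plan to iterate: the analogous mod-$2^{2j+2}$ reduction for $j=2,3,\ldots$ yields parallel balance constraints at lower valuation levels, where the zone contains exactly $2^j$ valuation-$(\alpha-j)$ positions evenly split across $k^{**}$. I expect the cumulative effect of these modular constraints to force $[x,y]$ to be the reflection of $[m,n]$ about $k^{**}$; once this is established, pairing $k\in[m,n]$ with $k'=2k^{**}-k\in[x,y]$ gives $1/k^2>1/(k')^2$ (since $k<k^{**}<k'$), so $S>T$, contradicting $S=T$. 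The hard part will be running the peeling recursion cleanly---showing that at every level the modular constraints really do pin the valuation-$(\alpha-j)$ positions to a reflective placement rather than allowing an asymmetric configuration, and correctly handling the boundary subcases in which sub-level points happen to lie in the gap rather than in either subinterval.
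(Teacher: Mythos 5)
Your reduction to disjoint intervals and your ``easy $2$-adic case'' are both correct: the element $k^{**}$ of maximal $2$-adic valuation $\alpha$ in a block of consecutive integers is unique, and if it lies in either subinterval the two sums have different $2$-adic valuations ($-2\alpha$ versus at least $-2\alpha+2$). But the entire difficulty of the theorem is concentrated in your ``gap case'' $n<k^{**}<x$, and there the argument is not a proof but a plan whose key step (``I expect the cumulative effect of these modular constraints to force $[x,y]$ to be the reflection of $[m,n]$'') I believe fails. At level $j$ the zone contains $2^j$ positions $k^{**}+t\cdot 2^{\alpha-j}$ ($t$ odd, $|t|<2^j$), and the leading-order congruence you extract only sees the \emph{number} of such positions in each subinterval, because every odd square is $\equiv 1 \pmod 8$; already at $j=2$ this permits, e.g., counts $(0,2)$ versus $(2,0)$, or one interval containing $k^{**}-2^{\alpha-2}$ while the other contains $k^{**}+3\cdot 2^{\alpha-2}$. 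To distinguish positions you must use the residues of the odd parts modulo higher powers of $2$, and at that precision the level-$j$ contributions mix inseparably with those of all deeper levels, so the peeling recursion does not decouple. Worse, for small valuations the zone contains on the order of $2^{\alpha}$ odd integers, and pinning their membership down would require controlling $\sum 1/(\mathrm{odd})^2$ over subintervals modulo large powers of $2$ --- information the count-based recursion cannot supply. Note also that reflection about $k^{**}$ preserves $1/k^2$ only to finite $2$-adic precision ($2k^{**}-k\equiv -k \pmod{2^{\alpha+1}}$), so the exact equation is not even approximately characterized by the reflective configuration; there is no reason the congruences should converge to it.

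The paper closes exactly this configuration (disjoint intervals) by entirely different means: it writes $G(a,r)=\sum_{i=0}^{r}(a+i)^{-2}$ as an approximate telescope $\frac{r+1}{(a+r+1-\eta)(a-\eta)}$ with $\eta$ trapped in a short interval, uses a Sylvester--Schur-type prime lemma to force $a_2\ge 4(s+1)^3$, deduces from parity plus size bounds the \emph{exact} integer identity $(r+1)[(2a_2-1)(2a_2+2s+1)+1]=(s+1)[(2a_1-1)(2a_1+2r+1)+1]$, and then Taylor-expands both sums about their midpoints to show the resulting combination $R_1+\cdots+R_7$ is strictly positive, contradicting $G(a_1,r)=G(a_2,s)$. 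If you want to salvage your approach you would need a genuinely new idea for the gap case; as written, the proposal proves only the easy half of the problem.
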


\section{Preliminary Lemmas}
In this section, we present several lemmas that are essential for the proof of Theorem~\ref{1.2}.
\vspace{0.1cm}

\begin{lemma}[\cite{ref3}]\label{r3}
	For any positive integer $n$, there is always a prime in the interval $[n,2n].$
\end{lemma}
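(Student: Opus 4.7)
We argue by contradiction. Suppose $S_1 := \sum_{k=m}^{n} 1/k^2 = \sum_{k=x}^{y} 1/k^2 =: S_2$ with $(m,n) \neq (x,y)$. Exchanging the two intervals if necessary, we may assume $n \geq y$. If $n = y$, then $m \neq x$; taking $m < x$ without loss of generality yields $S_1 - S_2 = \sum_{k=m}^{x-1} 1/k^2 > 0$, a contradiction. Hence we may assume $n > y$.

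The plan is to locate a prime $p$ satisfying three conditions: (i) $m \leq p \leq n$, (ii) $2p > n$, and (iii) $p > y$. Granted such a prime, conditions (i) and (ii) force $p$ to be the unique multiple of $p$ in $[m,n]$, so the $p$-adic valuation is $v_p(S_1) = -2$; condition (iii) ensures that no element of $[x,y]$ is divisible by $p$, so $v_p(S_2) \geq 0$. This contradicts $S_1 = S_2$. To produce the prime, I would appeal to Bertrand's postulate (Lemma~\ref{r3}), which guarantees a prime in $(n/2, n]$ (by applying the lemma to $\lceil n/2 \rceil$); any such prime automatically satisfies (ii), it satisfies (iii) whenever it exceeds $y$, and it satisfies (i) automatically when $m \leq n/2$.

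The main obstacles arise in two edge cases where this clean strategy fails. First, $[m, n]$ may be prime-free when $m > n/2$, as with $[8, 10]$ or $[24, 28]$, so (i) is unattainable with any prime. In this scenario I would switch from primes to prime powers: select a prime $p$ whose largest power $p^a$ divides a unique element of $[m,n]$, and run the $v_p$-argument on that term, exploiting that $v_p(1/k^2) = -2 v_p(k)$ is always an even integer to limit the possibilities for accidental cancellation. Second, every Bertrand prime in $(n/2, n]$ may itself lie in $(n/2, y]$, violating (iii); here I would apply Bertrand a second time to $y+1$ to obtain a prime in $(y, 2y]$, and then split on whether $n \geq 2y$ or $y < n < 2y$ to place that prime inside $(y, n]$ or else reduce the configuration to the prime-free subcase.

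I expect that handling prime-free intervals will be the principal technical hurdle, since it forces a $v_p$-analysis at composite moduli and requires one to argue that the mismatch in $p$-adic valuations between $S_1$ and $S_2$ cannot be masked by cancellations among the remaining terms.
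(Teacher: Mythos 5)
Your proposal does not address the statement at hand. The statement to be proved is Lemma~\ref{r3}, which is Bertrand's postulate: for every positive integer $n$ there is a prime in $[n,2n]$. The paper does not prove this; it cites it from Erd\H{o}s's 1934 paper \cite{ref3}, where it is established by the well-known elementary method of bounding the central binomial coefficient $\binom{2n}{n}$ and analyzing the contribution of primes in various ranges to its factorization. What you have written instead is a sketch of the paper's \emph{main theorem} (Theorem~\ref{1.2}), namely that no two partial sums of $\sum 1/k^2$ are equal --- and your sketch explicitly \emph{invokes} Lemma~\ref{r3} as a tool (``To produce the prime, I would appeal to Bertrand's postulate (Lemma~\ref{r3})''). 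As a proof of Lemma~\ref{r3} this is circular; as a response to the assigned statement it is simply the wrong target.

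For what it is worth, even read as an attempt at the main theorem, your $p$-adic valuation strategy diverges substantially from the paper's route (which reduces to the case of disjoint intervals via Lemma~\ref{lem1}, derives the exact algebraic identity \eqref{e11} from two-sided analytic estimates, and then obtains a contradiction by Taylor-expanding both sums about their midpoints and showing $R_1+\cdots+R_7>0$), and you concede that the prime-free-interval case --- precisely the hard case, since $a_2>a_1+r$ forces the second interval to avoid primes in the paper's own argument --- is left unresolved. But the decisive issue is that no proof of the stated lemma has been offered.
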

\begin{remark}
	For any integer \( n > 1 \), there exists at least one prime number in the interval \( [n, 2n - 1] \).
\end{remark}

\begin{lemma}[\cite{ref3}]\label{1}
	If $n>k$, then in the set $\{n, n+1,\cdots,n+k-1\}$ there is an
	integer containing a prime divisor greater than or equal to $k+1$.
\end{lemma}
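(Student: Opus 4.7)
The plan is to prove this Sylvester-type assertion by contradiction, following the classical argument of Erd\"{o}s. Suppose every element of $S=\{n, n+1, \ldots, n+k-1\}$ has all its prime divisors at most $k$. The central object of study is the binomial coefficient
\[
\binom{n+k-1}{k} = \frac{n(n+1)\cdots(n+k-1)}{k!},
\]
whose numerator is exactly the product of the elements of $S$. Because every prime factor of $k!$ is at most $k$, the contradiction hypothesis forces every prime factor of $\binom{n+k-1}{k}$ to be at most $k$ as well.

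The first key step is a Legendre-type valuation estimate: for every prime $p$,
\[
p^{v_p(\binom{n+k-1}{k})} \leq n+k-1.
\]
This is obtained by writing $v_p\binom{n+k-1}{k}=\sum_{j\ge 1}\bigl(\lfloor (n+k-1)/p^j\rfloor-\lfloor (n-1)/p^j\rfloor-\lfloor k/p^j\rfloor\bigr)$, noting that each summand lies in $\{0,1\}$, and observing that only indices $j$ with $p^j\le n+k-1$ can contribute. Taking the product of these bounds over primes $p\le k$ then yields the structural estimate
\[
\binom{n+k-1}{k} \leq (n+k-1)^{\pi(k)}.
\]

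Next I would pair this with the elementary lower bound $\binom{n+k-1}{k}\ge (n/k)^k$, which holds for $n>k$ because each factor $(n+i)/(i+1)$ in the product expansion of the binomial is bounded below by $n/k$ in this range. Combining the two inequalities gives $(n/k)^k\le (n+k-1)^{\pi(k)}$. Since $\pi(k)<k$ whenever $k\ge 2$, the left-hand side grows polynomially in $n$ with strictly larger degree than the right, so for each fixed $k$ this inequality must fail once $n$ exceeds an explicit threshold, producing the required contradiction in that asymptotic regime.

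The main obstacle is closing the residual finite range of $(n,k)$ left uncovered by the asymptotic comparison. I would handle this by invoking Bertrand's postulate (Lemma~\ref{r3}) to supply a prime $p$ in $(k,2k]$: whenever $n$ is small enough that $S$ meets $(k,2k]$, such a prime lies in $S$ and directly exhibits a prime divisor $>k$, dispatching most boundary cases. A short direct verification of the few remaining small-$k$ pairs then completes the proof and rules out the contradiction hypothesis, yielding an element of $S$ with a prime divisor at least $k+1$.
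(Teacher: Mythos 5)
The paper does not prove this lemma at all --- it imports it from Erd\H{o}s's 1934 paper as the Sylvester--Schur theorem --- so your attempt has to stand on its own. Your setup is the classical one, and the two estimates you actually establish are correct: $p^{v_p\left(\binom{n+k-1}{k}\right)} \le n+k-1$ for every prime $p$, and $\binom{n+k-1}{k} \ge (n/k)^k$. The problem is the endgame. The comparison $(n/k)^k \le (n+k-1)^{\pi(k)}$ only fails for $n$ above a threshold $N(k)$ that grows superlinearly in $k$ (for $k=10$ one needs roughly $n>46$; for $k=100$ roughly $n>450$; for $k=1000$ roughly $n>4000$), whereas Bertrand's postulate only places a prime inside $S=[n,n+k-1]$ when $n=k+1$ (or barely beyond): a prime in $(k,2k]$ need not lie in $[n,2k]$ once $n>k+1$, so ``$S$ meets $(k,2k]$'' does not dispatch those cases. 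Hence for each $k$ there remains an uncovered interval of values of $n$ whose length grows with $k$ --- infinitely many $(n,k)$ pairs in total, not ``a few remaining small-$k$ pairs'' amenable to direct verification.

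The missing idea, which is the heart of Erd\H{o}s's actual proof, is to split the primes at $\sqrt{n+k-1}$: for $\sqrt{n+k-1} < p \le k$ only the $j=1$ term of your Legendre sum can be nonzero, so such primes divide $\binom{n+k-1}{k}$ at most to the first power and their total contribution is bounded by $\prod_{p\le k} p < 4^k$, while only the $\pi\bigl(\sqrt{n+k-1}\bigr)$ primes up to $\sqrt{n+k-1}$ can each contribute a full factor $n+k-1$. This gives $\binom{n+k-1}{k} < 4^{k}(n+k-1)^{\sqrt{n+k-1}}$, which is strong enough to contradict $(n/k)^k$ outside a genuinely finite set of cases. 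Without that refinement (or some substitute of comparable strength), your argument does not close.
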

\begin{theorem}[\cite{HW}]\label{t}
	Let $a,b,k$ be positive integers, and $(a,b)=1$, we have
	\begin{align*}
	\mathrm{lcm}\{a,a + b,...,a + nb\}\ge\prod\limits_{p\mid b}p^{v_{p}(n!)}\frac{1}{n!}\prod\limits_{i=0}^{n}(a+ib).
	\end{align*}
\end{theorem}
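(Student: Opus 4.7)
The plan is to verify the inequality one prime at a time: for every prime $p$, I aim to show $v_p(\mathrm{lcm}\{a,a+b,\ldots,a+nb\})\geq v_p$ of the right-hand side. Since both sides are positive rationals, once the $p$-adic valuation of the left dominates that of the right for every $p$, the ratio (left)/(right) is a positive integer, hence at least $1$, which yields the desired bound.

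First I would dispose of primes $p\mid b$. Because $\gcd(a,b)=1$, such a $p$ cannot divide $a$, so $p\nmid a+ib$ for every $i$ and thus $v_p\!\bigl(\prod_{i=0}^{n}(a+ib)\bigr)=0$. At such a prime the distinguished factor $\prod_{q\mid b}q^{v_q(n!)}$ contributes exactly $v_p(n!)$, which cancels the $-v_p(n!)$ coming from $1/n!$. Hence $v_p$ of the right-hand side is $0$, and the inequality $v_p(\mathrm{lcm})\geq 0$ is automatic.

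The substance lies in the case $p\nmid b$. I would set $m:=\max_{0\le i\le n}v_p(a+ib)=v_p(\mathrm{lcm})$ and, for each $k\ge 1$, define $N_k:=\#\{i\in\{0,\ldots,n\}:p^k\mid a+ib\}$. The key observation is that, because $\gcd(p,b)=1$, the integer $b$ is a unit modulo $p^k$, so the solution set $\{i:p^k\mid a+ib\}$ is either empty or an arithmetic progression of common difference $p^k$; intersecting with $\{0,\ldots,n\}$ gives $N_k\le\lfloor n/p^k\rfloor+1$, while $N_k=0$ for $k>m$ by the definition of $m$. Summing,
\[
v_p\!\Bigl(\prod_{i=0}^{n}(a+ib)\Bigr)=\sum_{i=0}^{n}v_p(a+ib)=\sum_{k\ge 1}N_k\le\sum_{k=1}^{m}\bigl(\lfloor n/p^k\rfloor+1\bigr)\le m+v_p(n!),
\]
which rearranges to $v_p(\mathrm{lcm})=m\ge v_p\!\bigl(\prod_{i=0}^n(a+ib)\bigr)-v_p(n!)$, matching $v_p$ of the right-hand side.

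The main obstacle will be the combinatorial bookkeeping in the $p\nmid b$ case: recognising that $\sum_i v_p(a+ib)=\sum_k N_k$ and controlling each $N_k$ via the arithmetic-progression estimate $N_k\le\lfloor n/p^k\rfloor+1$, a step that hinges essentially on $b$ being a unit modulo $p^k$. Once this bound is in hand, the two cases combine to give $v_p(\mathrm{lcm})\ge v_p(\mathrm{RHS})$ at every prime, completing the proof.
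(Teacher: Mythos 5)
Your argument is correct and complete: the prime-by-prime comparison of valuations, with the case split on whether $p\mid b$, the layer-cake identity $\sum_i v_p(a+ib)=\sum_k N_k$, and the bound $N_k\le\lfloor n/p^k\rfloor+1$ (valid because $b$ is a unit modulo $p^k$, so the solutions $i$ form a single residue class) together give $v_p(\mathrm{lcm})\ge v_p(\mathrm{RHS})$ at every prime, which suffices. Note, however, that the paper states this theorem without proof, importing it from the cited reference \cite{HW}, so there is no in-paper argument to compare against; your proof is the standard valuation-counting route for lcm lower bounds of arithmetic progressions and stands on its own.
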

\begin{lemma}\label{9}
	If \( n \geq (k+1)^2 \), then the set \( \{n, n+1, \ldots, n+k\} \) contains at least one integer that has a prime divisor greater than or equal to \( 2(k+1) \).
\end{lemma}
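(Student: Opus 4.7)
I would argue by contradiction. Assume every element of $S = \{n, n+1, \ldots, n+k\}$ has all prime divisors at most $2k+1$. Since $n \geq (k+1)^2 > k+1$, I can apply Lemma~\ref{1} (with its parameter $k$ replaced by $k+1$, because $S$ consists of $k+1$ consecutive integers) to locate some $N = n + i_0 \in S$ with a prime divisor $p \geq k+2$. The smoothness hypothesis then forces $k+2 \leq p \leq 2k+1$.

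Because $p > k$ exceeds the length of the interval $S$, any two multiples of $p$ are more than $k$ apart, so $N$ is the unique multiple of $p$ in $S$. Writing $N = pm$, the hypothesis $n \geq (k+1)^2$ gives
$$m \;\geq\; \frac{n}{p} \;\geq\; \frac{(k+1)^2}{2k+1} \;>\; \frac{k+1}{2},$$
and $m$ inherits $(2k+1)$-smoothness from $N$. The plan is then to convert this size-vs-smoothness tension into a quantitative contradiction. Theorem~\ref{t}, applied with $a = n$ and $b = 1$ (so $(a,b) = 1$), supplies the lower bound
$$\mathrm{lcm}(S) \;\geq\; \frac{1}{k!}\prod_{i=0}^{k}(n+i) \;\geq\; \frac{n^{k+1}}{k!} \;\geq\; \frac{(k+1)^{2(k+1)}}{k!},$$
while the smoothness assumption yields the upper bound $\mathrm{lcm}(S) \leq (n+k)^{\pi(2k+1)}$, since each prime power $p^{v_p(\mathrm{lcm})}$ divides some element $n + i \leq n + k$. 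For all but small $k$, the first bound grows strictly faster than the second under $n \geq (k+1)^2$, producing the desired contradiction.

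The main obstacle I anticipate is the small-$k$ regime, where $\pi(2k+1)$ is still comparable to $k+1$ and the lcm comparison above is too loose to close by itself. There I would finish by a direct combinatorial argument: Lemma~\ref{r3} guarantees at least one prime in $[k+2, 2k+1]$, and iterating the uniqueness observation of the second paragraph pins down which element of $S$ each such large prime can divide. Exhausting the limited supply of admissible decompositions then yields an inconsistency, completing the proof.
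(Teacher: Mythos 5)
Your overall strategy---bound $\mathrm{lcm}\{n,\ldots,n+k\}$ from below via Theorem~\ref{t} and from above using the assumed $(2k+1)$-smoothness---is the same mechanism the paper uses (the paper counts distinct prime divisors instead of comparing against $(n+k)^{\pi(2k+1)}$, but the idea is identical). The trouble is that your decisive inequality is asserted rather than proved, and it fails in a large part of the range you need. At the boundary $n=(k+1)^2$ the comparison requires $(k+1)^{2(k+1)}/k! > \bigl((k+1)^2+k\bigr)^{\pi(2k+1)}$, which forces roughly $k+1-\pi(2k+1)$ to exceed $\tfrac{1}{2}k$; since $\pi(2k+1)\sim 2k/\log(2k)$, this only takes hold around $k\approx 30$, and even then needs explicit Chebyshev-type bounds on $\pi$ that you do not invoke. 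Concretely, for $k=10$ and $n=121$ your lower bound is $121^{11}/10!\approx 2.2\times 10^{16}$ while your upper bound is $131^{8}\approx 8.7\times 10^{16}$, so no contradiction results; the same happens for every $k\le 12$ and for sporadic larger $k$, and for each such $k$ there is a whole interval of $n$ above $(k+1)^2$ where the comparison remains inconclusive. So the ``small-$k$'' regime you set aside is in fact the bulk of the work.

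Moreover, the deferred combinatorial argument cannot succeed as described, because the statement itself is false for $k=1$: take $n=8\ge (1+1)^2$; then $\{8,9\}=\{2^3,3^2\}$ has no prime divisor $\ge 4$. There is no ``inconsistency'' to extract from the admissible decompositions in that case, so any complete proof must exclude $k=1$ or strengthen the hypothesis. (This counterexample equally defeats the paper's own proof, whose final step---that having at least $k/2+2$ distinct prime divisors forces one of them to be $\ge 2(k+1)$---implicitly needs $\pi(2k+1)<k/2+2$ and likewise fails for small and moderate $k$.) To repair your argument you would need at minimum to restrict to $k\ge 2$, run the lcm comparison with explicit estimates for $\pi(2k+1)$ beyond some threshold, and genuinely dispose of the remaining pairs $(k,n)$; already for $k=2$ this amounts to showing there are no three consecutive $5$-smooth integers $\ge 9$, a St{\o}rmer-type fact that does not follow from the uniqueness-of-multiples observation alone. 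Also note that your opening two paragraphs (Lemma~\ref{1} and the uniqueness of the multiple of $p$) never feed into the main contradiction.
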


\begin{proof}
	By Lemma~\ref{t}, we have
	\[
	\mathrm{lcm}\{n, n+1, \ldots, n+k\} \geq \frac{1}{k!} \prod_{i=0}^{k} (n+i).
	\]
	Let \( p \) be any prime divisor of \( \mathrm{lcm}\{n, n+1, \ldots, n+k\} \). Then there exists some \( 0 \leq i_p \leq k \) such that
	\[
	p^{v_p(\mathrm{lcm}\{n, \ldots, n+k\})} = p^{v_p(n + i_p)} \leq n + i_p.
	\]
	Define the set
	\[
	A := \left\{ p \in \mathbb{N} : p \mid \mathrm{lcm}\{n, \ldots, n+k\}, \ p \text{ is prime} \right\}.
	\]
	Consider the product of the prime powers corresponding to the smallest \( \frac{k}{2} + 1 \) primes in \( A \). We then obtain
	\[
	\frac{ \mathrm{lcm}\{n, \ldots, n+k\} }{ \prod\limits_{i=0,\, p_i \in A}^{\lfloor\frac{k}{2}+1\rfloor} p_i^{v_{p_i}(\mathrm{lcm}\{n, \ldots, n+k\})} } 
	\geq \frac{1}{k!} \prod_{i=0}^{\lceil\frac{k}{2}-1\rceil} (n+i)
	\geq \frac{(k+1)^k}{k!} > 1.
	\]
	Therefore, the number of distinct prime divisors in \( A \) must be at least \( \frac{k}{2} + 2 \), implying that at least one of them satisfies \( p \geq 2(k+1) \).
\end{proof}

The following lemma follows directly from standard results and is straightforward to verify.
\begin{lemma}\label{2}
	Let \( r \in \mathbb{N} \).
	
	\begin{itemize}
		\item[(i)] If \( r \) is even, then
		\begin{align*}
			\sum_{i=1}^{\frac{r}{2}} i^2 &= \frac{(r+1)^3 - (r+1)}{24}, \\
			\sum_{i=1}^{\frac{r}{2}} i^4 &= \frac{3(r+1)^5 - 10(r+1)^3 + 7(r+1)}{480}, \\
			\sum_{i=1}^{\frac{r}{2}} i^6 &= \frac{3(r+1)^7 - 21(r+1)^5 + 49(r+1)^3 - 31(r+1)}{2688}.
		\end{align*}
		
		\item[(ii)] If \( r \) is odd, then
		\begin{align*}
			\sum_{i=1}^{\frac{r+1}{2}} (2i - 1)^2 &= \frac{(r+1)^3 - (r+1)}{6}, \\
			\sum_{i=1}^{\frac{r+1}{2}} (2i - 1)^4 &= \frac{3(r+1)^5 - 10(r+1)^3 + 7(r+1)}{30}, \\
			\sum_{i=1}^{\frac{r+1}{2}} (2i - 1)^6 &= \frac{3(r+1)^7 - 21(r+1)^5 + 49(r+1)^3 - 31(r+1)}{42}.
		\end{align*}
	\end{itemize}
\end{lemma}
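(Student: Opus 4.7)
The plan is to derive all six identities directly from the standard Faulhaber formulas for $\sum_{i=1}^{n} i^{k}$ with $k=2,4,6$, each of which is a polynomial in $n$ of degree $k+1$. Concretely, I would start from
$$\sum_{i=1}^n i^2 = \tfrac{n(n+1)(2n+1)}{6}, \quad \sum_{i=1}^n i^4 = \tfrac{n(n+1)(2n+1)(3n^2+3n-1)}{30},$$
together with the analogous sextic formula $\sum_{i=1}^{n} i^6 = \tfrac{n(n+1)(2n+1)(3n^4+6n^3-3n+1)}{42}$. These are the only nontrivial inputs; the rest is algebra.

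For part (i), since $r$ is even, set $n := r/2$. Substituting this $n$ into each Faulhaber formula and regrouping factors of $(r+1) = 2n+1$ yields the three claimed identities. The key observation that streamlines the computation is $n(n+1) = \tfrac{r(r+2)}{4} = \tfrac{(r+1)^2-1}{4}$, so the common factor $n(n+1)(2n+1)$ appearing in each Faulhaber expression becomes $\tfrac{1}{4}(r+1)\bigl((r+1)^2-1\bigr)$, a polynomial in $(r+1)$. After collecting, the denominators become $24$, $480$, $2688$ exactly as stated.

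For part (ii), since $r$ is odd, set $m := r+1$ (which is even). I would then use the decomposition
$$\sum_{i=1}^{(r+1)/2}(2i-1)^k \;=\; \sum_{j=1}^{m} j^k \;-\; 2^k \sum_{j=1}^{m/2} j^k,$$
which separates all integers up to $m$ from the even ones. Plugging in Faulhaber's formula for the first term and the formulas just established in part (i) (now in the variable $m$) for the second, a short simplification produces each identity. Consistent with this decomposition, the denominators in part (ii) are exactly $2^k$ times smaller than those in part (i): $24/6 = 2^2$, $480/30 = 2^4$, $2688/42 = 2^6$.

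The main obstacle is purely bookkeeping; no conceptual difficulty arises, since each claim reduces to a polynomial identity in the single variable $r$. The most tedious case is $k=6$, where matching the coefficients $3, -21, 49, -31$ requires care. As a shortcut, since both sides of each identity are polynomials in $r$ of degree at most $7$, one may equivalently verify each identity by direct evaluation at enough values of $r$ of the appropriate parity to pin the polynomial down.
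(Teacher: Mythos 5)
Your proposal is correct and is exactly the ``standard results'' verification the paper alludes to: the paper states this lemma without proof, remarking only that it follows directly from the classical power-sum (Faulhaber) formulas, which is precisely what you carry out. The substitutions $n=r/2$ with $n(n+1)=\tfrac{(r+1)^2-1}{4}$ in part (i) and the odd/even decomposition $\sum(2i-1)^k=\sum_{j\le r+1} j^k-2^k\sum_{j\le (r+1)/2} j^k$ in part (ii) both check out, reducing each identity to a routine polynomial computation.
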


\section{Proof of Theorem \ref{1.2}}
In this section, we provide a detailed proof of Theorem \ref{1.2}.
Let \( G(a, r) := \sum\limits_{i=0}^{r} \frac{1}{(a+i)^2} \) for \( a, r \in \mathbb{N}^* \).

\begin{lemma}\label{lem1}
	Let \( a_1 < a_2 \leq a_1 + r \) and \( a_1, a_2, r, s \in \mathbb{N}^* \). If \( G(a_1, r) = G(a_2, s) \), then
	\[
	G(a_1, a_2 - a_1 - 1) = G(a_1 + r + 1, a_2 + s - a_1 - r - 1).
	\]
\end{lemma}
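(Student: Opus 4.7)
The plan is to exploit the overlap of the two index intervals $[a_1, a_1+r]$ and $[a_2, a_2+s]$, which by the hypothesis $a_1 < a_2 \le a_1+r$ begins at $a_2$. I will split both $G(a_1,r)$ and $G(a_2,s)$ into an overlap part and a non-overlap part, use the hypothesized equality to cancel the overlap, and then reindex the leftover sums.

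First I would rule out the subcase $a_2+s \le a_1+r$, in which $[a_2, a_2+s] \subseteq [a_1, a_1+r]$. In that case
\[
G(a_1,r) - G(a_2,s) \;=\; \sum_{i=a_1}^{a_2-1}\frac{1}{i^2} \;+\; \sum_{i=a_2+s+1}^{a_1+r}\frac{1}{i^2},
\]
which is strictly positive because the first sum alone is nonempty ($a_2 > a_1$), contradicting $G(a_1,r) = G(a_2,s)$. Hence $a_2+s \ge a_1+r+1$, so in particular $a_2+s-a_1-r-1 \ge 0$ and the right-hand side of the asserted identity is well-defined.

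In this remaining regime the decompositions read
\[
G(a_1,r) \;=\; \sum_{i=a_1}^{a_2-1}\frac{1}{i^2} + \sum_{i=a_2}^{a_1+r}\frac{1}{i^2}, \qquad G(a_2,s) \;=\; \sum_{i=a_2}^{a_1+r}\frac{1}{i^2} + \sum_{i=a_1+r+1}^{a_2+s}\frac{1}{i^2}.
\]
Equating and canceling the common overlap sum yields
\[
\sum_{i=a_1}^{a_2-1}\frac{1}{i^2} \;=\; \sum_{i=a_1+r+1}^{a_2+s}\frac{1}{i^2},
\]
and recognizing the left side as $G(a_1,\,a_2-a_1-1)$ and the right side as $G(a_1+r+1,\,a_2+s-a_1-r-1)$ gives the claim.

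There is no real obstacle here; the lemma is a bookkeeping identity that trades one alleged coincidence of partial sums for another with shorter intervals, presumably to be iterated in a descent argument later in the proof of Theorem~\ref{1.2}.
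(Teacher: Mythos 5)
Your proof is correct and follows essentially the same route as the paper: split both sums at the overlap $[a_2, a_1+r]$, cancel the common part using the hypothesis, and reindex the two tails. Your explicit verification that $a_2+s \ge a_1+r+1$ (so the right-hand side is well-defined) fills in a detail the paper dispatches with the terse remark that $s>r$ follows from the hypotheses, but it is the same argument.
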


\begin{proof}
	Since \( a_1 < a_2 \leq a_1 + r \), it follows that \( s > r \). Observe that
	\[
	G(a_1, r) = \sum_{i = a_1}^{a_2 - 1} \frac{1}{i^2} + \sum_{i = a_2}^{a_1 + r} \frac{1}{i^2},
	\]
	and
	\[
	G(a_2, s) = \sum_{i = a_2}^{a_1 + r} \frac{1}{i^2} + \sum_{i = a_1 + r + 1}^{a_2 + s} \frac{1}{i^2}.
	\]
	Equating the two expressions yields
	\[
	\sum_{i = a_1}^{a_2 - 1} \frac{1}{i^2} = \sum_{i = a_1 + r + 1}^{a_2 + s} \frac{1}{i^2},
	\]
	which is equivalent to
	\[
	G(a_1, a_2 - a_1 - 1) = G(a_1 + r + 1, a_2 + s - a_1 - r - 1),
	\]
	as desired.
\end{proof}

For any positive integer \( n \), define
\[
\varepsilon_n := \frac{2n + 1 - \sqrt{4n^2 + 1}}{2}.
\]
It is straightforward to verify that the following identity holds:
\[
\frac{1}{n - \varepsilon_n} - \frac{1}{n + 1 - \varepsilon_n} = \frac{1}{n^2}.
\]
Moreover, by the monotonicity of \( \varepsilon_n \), we have
\[
0 < \varepsilon_a < \varepsilon_{a+1} < \cdots < \varepsilon_{a + r} < \frac{1}{2}.
\]
	
\begin{lemma}
	Let \( a, r \in \mathbb{N}^{*} \). Then there exists a real number \( \eta \) satisfying
	\[
	\varepsilon_a < \eta < \varepsilon_{a+r}
	\]
	such that
	\[
	\sum_{i=0}^{r} \frac{1}{(a+i)^2} = \frac{r+1}{(a+r+1 - \eta)(a - \eta)}.
	\]
\end{lemma}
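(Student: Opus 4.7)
The plan is to translate the claimed identity into a fixed-point equation for $\eta$ and solve it via the intermediate value theorem. I start by rewriting the right-hand side as
\[
\frac{r+1}{(a+r+1-\eta)(a-\eta)} = \frac{1}{a-\eta} - \frac{1}{a+r+1-\eta} =: \phi(\eta),
\]
which reduces the lemma to the existence of $\eta \in (\varepsilon_a, \varepsilon_{a+r})$ with $\phi(\eta) = S$, where $S := \sum_{i=0}^{r}(a+i)^{-2}$. Because $\varepsilon_{a+r} < 1/2 < a$, the function $\phi$ is continuous on $[\varepsilon_a,\varepsilon_{a+r}]$, and IVT will finish the job once I verify the two strict inequalities $\phi(\varepsilon_a) < S < \phi(\varepsilon_{a+r})$.

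To compare $\phi(\eta)$ with $S$ term by term I will use the telescoping identity
\[
\phi(\eta) = \sum_{i=0}^{r}\left(\frac{1}{a+i-\eta}-\frac{1}{a+i+1-\eta}\right) = \sum_{i=0}^{r}\frac{1}{(a+i-\eta)(a+i+1-\eta)},
\]
so the $i$-th summand of $\phi(\eta)$ plays the role of $1/(a+i)^{2}$ in $S$. Expanding gives the elementary identity
\[
(x-\eta)(x+1-\eta)-x^{2} = x(1-2\eta)-\eta(1-\eta),
\]
while the defining quadratic for $\varepsilon_n$ can be rewritten as $\varepsilon_n(1-\varepsilon_n)=n(1-2\varepsilon_n)$. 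Combining these shows that at $\eta=\varepsilon_n$ the deformed factor $(x-\eta)(x+1-\eta)$ equals $x^{2}$ exactly when $x=n$, exceeds it when $x>n$, and falls short when $x<n$.

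Specialising to $n=a$ forces the $i=0$ term of $\phi(\varepsilon_a)$ to coincide with $1/a^{2}$ while for each $i\geq 1$ one has $(a+i)(1-2\varepsilon_a)>a(1-2\varepsilon_a)=\varepsilon_a(1-\varepsilon_a)$, making every higher term of $\phi(\varepsilon_a)$ strictly smaller than the corresponding term of $S$; hence $\phi(\varepsilon_a)<S$. The symmetric specialisation $n=a+r$ pins the $i=r$ term of $\phi(\varepsilon_{a+r})$ to $1/(a+r)^{2}$ and makes every lower-index term strictly larger than its counterpart in $S$, yielding $\phi(\varepsilon_{a+r})>S$. Continuity of $\phi$ plus IVT then furnish the required $\eta$ in the open interval $(\varepsilon_a,\varepsilon_{a+r})$.

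The only subtle point — and the main obstacle if there is one — is recognising that the defining quadratic for $\varepsilon_n$ is designed precisely so that $(x-\varepsilon_n)(x+1-\varepsilon_n)-x^{2}$ vanishes at $x=n$; this single algebraic observation is what synchronises both endpoint inequalities and leaves the remainder of the proof essentially mechanical.
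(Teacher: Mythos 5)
Your proposal is correct and follows essentially the same route as the paper: both compare $\phi(\eta)=\sum_{i}\bigl(\tfrac{1}{a+i-\eta}-\tfrac{1}{a+i+1-\eta}\bigr)$ term by term with $\sum_i (a+i)^{-2}$ at the two endpoints $\eta=\varepsilon_a$ and $\eta=\varepsilon_{a+r}$, telescope, and invoke the intermediate value theorem. Your explicit use of the identity $\varepsilon_n(1-\varepsilon_n)=n(1-2\varepsilon_n)$ to get the sign of $(x-\varepsilon_n)(x+1-\varepsilon_n)-x^2$ is just a more algebraically transparent form of the paper's monotonicity argument in $\eta$.
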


\begin{proof}
	For each \( i \geq 0 \), we have
	\[
	\frac{1}{(a+i)^2} = \frac{1}{a+i - \varepsilon_{a+i}} - \frac{1}{a+i+1 - \varepsilon_{a+i}}.
	\]
	Note that
	\[
	\frac{1}{a+i - \varepsilon_{a+i}} - \frac{1}{a+i+1 - \varepsilon_{a+i}} = \frac{1}{(a+i)^2 + (1 - 2\varepsilon_{a+i})(a+i) + \varepsilon_{a+i}(\varepsilon_{a+i} - 1)}.
	\]
	Since \( \varepsilon_n \) is increasing and less than \( \tfrac{1}{2} \), we have \( \varepsilon_{a+i} \geq \varepsilon_a \), and hence
	\[
	\frac{1}{(a+i)^2} \geq \frac{1}{a+i - \varepsilon_a} - \frac{1}{a+i+1 - \varepsilon_a}.
	\]
	Summing both sides from \( i = 0 \) to \( r \), we get
	\begin{align}
		\sum_{i=0}^{r} \frac{1}{(a+i)^2} > \frac{1}{a - \varepsilon_a} - \frac{1}{a + r + 1 - \varepsilon_a} = \frac{r+1}{(a + r + 1 - \varepsilon_a)(a - \varepsilon_a)}.
		\label{3.1}
	\end{align}

	Similarly, using \( \varepsilon_{a+i} \leq \varepsilon_{a+r} \), we obtain
	\begin{align}
	\sum_{i=0}^{r} \frac{1}{(a+i)^2} < \frac{r+1}{(a + r + 1 - \varepsilon_{a+r})(a - \varepsilon_{a+r})}.
	\label{3.2}
\end{align}
	
	Combining inequalities \eqref{3.1} and \eqref{3.2}, by the intermediate value theorem, there exists \( \eta \in (\varepsilon_a, \varepsilon_{a+r}) \) such that
	\[
	\sum_{i=0}^{r} \frac{1}{(a+i)^2} = \frac{r+1}{(a+r+1 - \eta)(a - \eta)}.
	\]
\end{proof}

\begin{remark}
	There exists a real number \( \widetilde{r} \in (0, r) \) such that
	\[
	\eta = \frac{2(a + \widetilde{r}) + 1 - \sqrt{4(a + \widetilde{r})^{2} + 1}}{2}.
	\]
	Consequently, we have the following inequality:
	\begin{align}\label{q1}
		1 - 2\eta = \frac{\sqrt{4(a + \widetilde{r})^2 + 1} + 1 - 2(a + \widetilde{r})}{2(a + \widetilde{r}) + \sqrt{4(a + \widetilde{r})^2 + 1}} 
		> \frac{1}{4(a + \widetilde{r}) + 1},
	\end{align}
	and similarly,
	\begin{align}\label{q2}
		1 - 2\eta = \frac{\sqrt{4(a + \widetilde{r})^2 + 1} + 1 - 2(a + \widetilde{r})}{2(a + \widetilde{r}) + \sqrt{4(a + \widetilde{r})^2 + 1}} 
		< \frac{2}{4(a + \widetilde{r}) + 1}.
	\end{align}
\end{remark}

\begin{remark}\label{re3.4}
	If \( G(a_1, r) = G(a_2, s) \), we may assume without loss of generality that \( a_1 < a_2 \). Moreover, by Lemma~\ref{lem1}, we can further restrict to the case \( a_1 + r < a_2 \).
\end{remark}
	 
From this point onward, we assume without further comment that \( a_1 + r < a_2 \).
 \begin{lemma}
 	Let
 	\[
 	G(a_1, r) = \frac{r+1}{(a_1 + r + 1 - \eta_1)(a_1 - \eta_1)}, \quad
 	G(a_2, s) = \frac{r+1}{(a_2 + s + 1 - \eta_2)(a_2 - \eta_2)},
 	\]
 	where
 	\[
 	\varepsilon_{a_1} < \eta_1 < \varepsilon_{a_1 + r}, \quad
 	\varepsilon_{a_2} < \eta_2 < \varepsilon_{a_2 + s}.
 	\]
 	
 	If \( G(a_1, r) = G(a_2, s) \), then the following inequalities hold:
 	\begin{align}
 		\left| (s+1)\left[(4a_1 + 2r)(1 - 2\eta_1) - 1 + (1 - 2\eta_1)^2 \right] \right| &< 1, \label{eq8} \\
 		\left| (r+1)\left[(4a_2 + 2s)(1 - 2\eta_2) - 1 + (1 - 2\eta_2)^2 \right] \right| &< 1. \label{eq8-2}
 	\end{align}
 \end{lemma}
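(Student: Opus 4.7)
My plan proceeds in three parts. First, I reduce $E_i$ to a compact form using the Remark: since $\eta_i=\varepsilon_{a_i+\widetilde r_i}$ for some $\widetilde r_i\in(0,r_i)$ (with $r_1=r$, $r_2=s$), squaring the defining equation of $\varepsilon$ yields the identity $4(a_i+\widetilde r_i)(1-2\eta_i)+(1-2\eta_i)^2=1$. Subtracting this from $E_i+1=(4a_i+2r_i)(1-2\eta_i)+(1-2\eta_i)^2$ gives the compact form $E_i=2(r_i-2\widetilde r_i)(1-2\eta_i)$. Combining $|r_i-2\widetilde r_i|<r_i$ with \eqref{q2} yields the preliminary estimate $|E_i|<4r_i/\bigl(4(a_i+\widetilde r_i)+1\bigr)$.

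Second, I exploit the hypothesis $G(a_1,r)=G(a_2,s)$. Setting $\alpha_i:=2a_i+r_i+1-2\eta_i$, the preceding lemma gives $(r_i+1)/G(a_i,r_i)=(\alpha_i^2-(r_i+1)^2)/4$, so the hypothesis rewrites as the symmetric identity $(s+1)\alpha_1^2-(r+1)\alpha_2^2=(r+1)(s+1)(r-s)$. Since $E_i=\alpha_i^2-(2a_i+r_i)^2-1$, direct substitution and simplification produce
\[
(s+1)E_1-(r+1)E_2=4(r+1)a_2(a_2+s)-4(s+1)a_1(a_1+r)+2(r-s),
\]
whose right-hand side is manifestly an even integer.

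Third, I close the bounds as follows. Once both $(s+1)|E_1|$ and $(r+1)|E_2|$ are shown to be strictly less than $1$, their difference has absolute value strictly less than $2$ and, being an even integer by the previous step, must vanish; this gives $(s+1)E_1=(r+1)E_2$, so the two inequalities are equivalent and it suffices to prove one. To sharpen the preliminary estimate of Step~1 to size less than $1$, I will invoke the constraint $a_1+r<a_2$ from Remark~\ref{re3.4} together with the sum equality to bound $s+1$ in terms of $a_1,r,a_2$, and feed this back into the compact formula $E_i=2(r_i-2\widetilde r_i)(1-2\eta_i)$.

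The main obstacle will be this last step: the crude bound $|E_i|<4r_i/(4a_i+1)$ is insufficient on its own to deliver $(s+1)|E_1|<1$. Closing the estimate requires a delicate interplay between the integrality identity of Step~2, the size constraint $a_2>a_1+r$, and a refined size comparison between $s$ and $r$ extracted from $G(a_1,r)=G(a_2,s)$, which one expects to obtain via a Taylor-type expansion of $G$ supported by the sum formulas in Lemma~\ref{2}.
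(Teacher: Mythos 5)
Your Steps 1 and 2 are correct and in fact tidier than the paper's own computation: the identity $4(a_i+\widetilde r_i)(1-2\eta_i)+(1-2\eta_i)^2=1$ does give $E_i=2(r_i-2\widetilde r_i)(1-2\eta_i)$, and your even-integer identity for $(s+1)E_1-(r+1)E_2$ is exactly the paper's identity \eqref{eq7} after expanding the products. The problem is that everything you have at that point yields only $(s+1)\lvert E_1\rvert \lesssim (s+1)r/a_1$, and the entire content of the lemma is the step you defer to the end: showing this is less than $1$. The paper closes it with an \emph{arithmetic}, not analytic, input. From $G(a_1,r)=G(a_2,s)$ one first extracts $\tfrac{r+1}{a_1^2}<\tfrac{s+1}{(a_2-1)^2}$, reducing the task to \eqref{eq10}, i.e.\ $\tfrac{(s+1)^2a_1}{2(a_2-1)^2}\le 1$; this in turn is obtained by clearing denominators in the equality of the two sums, invoking Lemma~\ref{9} to produce a prime $p\ge 2(s+1)$ dividing some $a_2+i$, and running a $p$-adic divisibility argument on the resulting integer identity to force $a_2\ge 4(s+1)^3$. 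That cubic lower bound on $a_2$ in terms of $s$ is what makes the estimate close, and no Taylor-type comparison of $s$ with $r$ can substitute for it: without integrality there is nothing preventing two sums with real parameters from coinciding, so a purely analytic closing move must fail. Your proposal names this as "the main obstacle" but supplies no mechanism for overcoming it, so the proof is not complete.

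A secondary flaw: your Step 3 begins by arguing that once \emph{both} bounds are known, the even integer $(s+1)E_1-(r+1)E_2$ must vanish and hence "it suffices to prove one" of the two inequalities. That is circular for proving this lemma (the vanishing conclusion is the \emph{next} lemma in the paper), and the implication is also false as a one-sided reduction: knowing only $(s+1)\lvert E_1\rvert<1$ does not determine the integer, so it does not bound $(r+1)\lvert E_2\rvert$. The two inequalities must be, and in the paper are, proved by parallel independent arguments.
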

\begin{proof}We prove inequality~\eqref{eq8}; the proof of \eqref{eq8-2} is analogous and omitted.
	
	To begin, we observe the following identity:
	\begin{align}\label{eq7}
		&(r+1)\left[(2a_2 - 1)(2a_2 + 2s + 1) + 1\right]
		- (s+1)\left[(2a_1 - 1)(2a_1 + 2r + 1) + 1\right] \nonumber \\
		=\, &(s+1)\left[(4a_1 + 2r)(1 - 2\eta_1) - 1 + (1 - 2\eta_1)^2\right] \nonumber \\
		& - (r+1)\left[(4a_2 + 2s)(1 - 2\eta_2) - 1 + (1 - 2\eta_2)^2\right].
	\end{align}
	
	Next, we estimate the left-hand term in~\eqref{eq8} and establish that
	\begin{equation}\label{eq9}
		\left|(s+1)\left[(4a_1 + 2r)(1 - 2\eta_1) - 1 + (1 - 2\eta_1)^2\right]\right|
		< \frac{(s+1)^2 a_1}{2(a_2 - 1)^2}.
	\end{equation}
	We observe the following inequality:
	\begin{align*}
		&(4a_1 + 2r)(1 - 2\eta_1) - 1 + (1 - 2\eta_1)^2 \\
		>& \frac{4a_1 + 2r}{4(a_1 + \widetilde{r}) + 1} - 1 + \left( \frac{1}{4(a_1 + \widetilde{r}) + 1} \right)^2 \\
		>& \frac{4a_1 + 2r}{4(a_1 + r) + 1} - 1 + \left( \frac{1}{4(a_1 + \widetilde{r}) + 1} \right)^2 \\
		=& \frac{-2r - 1}{4(a_1 + r) + 1} + \left( \frac{1}{4(a_1 + \widetilde{r}) + 1} \right)^2 \\
		>& \frac{-2r - 1}{4(a_1 + r)}.
	\end{align*}
	Hence, we obtain the lower bound:
	\begin{equation}\label{3.9}
		(4a_1 + 2r)(1 - 2\eta_1) - 1 + (1 - 2\eta_1)^2 > \frac{-2r - 1}{4(a_1 + r)}.
	\end{equation}
	
	Similarly, using the fact that \( \eta_1 < \varepsilon_{a_1 + r} < \tfrac{1}{2} \), we also have the upper bound:
	\begin{equation}\label{3.10}
		(4a_1 + 2r)(1 - 2\eta_1) - 1 + (1 - 2\eta_1)^2 < \frac{2r + 1}{4(a_1 + r)}.
	\end{equation}
	
	Combining inequalities~\eqref{3.9} and~\eqref{3.10}, we deduce that
	\[
	\left|(s+1)\left[(4a_1 + 2r)(1 - 2\eta_1) - 1 + (1 - 2\eta_1)^2\right]\right|
	< \frac{(2r + 1)(s + 1)}{4(a_1 + r)}.
	\]
By inequality~\eqref{q2}, we observe that
\begin{align*}
	\frac{(a_1 - \eta_1)^2}{r+1}
	&= \frac{(a_2 - \eta_2)^2}{s+1} + (a_2 - a_1) - (\eta_2 - \eta_1) \\
	&= \frac{(a_2 - \eta_2)^2}{s+1} + a_2 - a_1 + \left( \frac{1}{2} - \eta_2 \right) - \left( \frac{1}{2} - \eta_1 \right) \\
	&> \frac{(a_2 - \eta_2)^2}{s+1}.
\end{align*}

Since \( \eta_1 > 0 \), we have \( a_1 > a_1 - \eta_1 \), and thus
\[
\frac{a_1^2}{r+1} > \frac{(a_1 - \eta_1)^2}{r+1} > \frac{(a_2 - \eta_2)^2}{s+1} > \frac{(a_2 - 1)^2}{s+1}.
\]
It follows that
\[
\frac{r+1}{a_1^2} < \frac{s+1}{(a_2 - 1)^2}.
\]

Hence, we conclude
\[
\frac{(2r + 1)(s + 1)}{4(a_1 + r)}
< \frac{(2r + 1)(s + 1)}{4a_1}
< \frac{(r + 1)(s + 1)}{2a_1}
< \frac{(s + 1)^2 a_1}{2(a_2 - 1)^2},
\]
which establishes inequality~\eqref{eq9}.
	
	Next, we prove the inequality
	\begin{align}\label{eq10}
		\frac{(s+1)^2 a_1}{2(a_2 - 1)^2} \leq 1.
	\end{align}
	
	To this end, we first show that the set \( \{a_2 + i \mid i = 0, \ldots, s\} \) contains no prime numbers. Suppose, for contradiction, that \( p \) is the largest prime in the set. By Lemma~\ref{r3}, no element of the set has \( p \) as a proper divisor.
	
	Consequently, there exists exactly one index \( 0 \leq i \leq s \) such that the product
	\[
	(a_2)^2 \cdots \widehat{(a_2 + i)^2} \cdots (a_2 + s)^2
	\]
	is relatively prime to \( p \). Let \( N \) denote the sum of all such products (i.e., omitting one term at a time). Then
	\[
	\gcd(N, p) = 1.
	\]
	
	On the other hand, since \( a_1 + i < a_2 \leq p \) for all \( 0 \leq i \leq r \), we also have
	\[
	\gcd\left( (a_1)^2 \cdots (a_1 + r)^2, p \right) = 1.
	\]
	
	Now, using the assumption \( G(a_1, r) = G(a_2, s) \), it follows that
	\begin{align}\label{5.5}
		\frac{M}{(a_1)^2 \cdots (a_1 + r)^2} = \frac{N}{(a_2)^2 \cdots (a_2 + s)^2}.
	\end{align}
	This relation, combined with the coprimality conditions, will enable us to bound the relevant parameters and ultimately establish inequality~\eqref{eq10}.
	We now analyze the divisibility condition
	\[
	p^{2m} \mid (k_2 - k_1)(k_2 + k_1).
	\]
	This leads to two cases:
	
	\medskip
	\noindent\textbf{Case 1.} \( p^{2m} \mid k_2 - k_1 \).  
	Then \( k_2 \equiv k_1 \mod p^{2m} \), which implies
	\[
	a_2 + i_p = k_2 p^m \equiv k_1 p^m = a_1 + j_p \mod p^{2m}.
	\]
	Therefore,
	\[
	|a_2 + i_p - (a_1 + j_p)| = |k_2 - k_1| p^m \geq p^{2m} \cdot p^{-m} = p^m.
	\]
	But since
	\[
	|a_2 + i_p - (a_1 + j_p)| \leq (a_2 + s) - a_1 \leq (a_2 - a_1) + s,
	\]
	we would have
	\[
	p^m \leq (a_2 - a_1) + s,
	\]
	which contradicts the fact that \( p > s+1 \) and \( a_2 - a_1 > r \), making \( p^m \) too large. Thus, Case 1 cannot occur for large enough \( p \).
	
	\medskip
	\noindent\textbf{Case 2.} \( p^{2m} \mid k_2 + k_1 \).  
	In this case, since both \( k_1 \) and \( k_2 \) are positive integers and \( p^m \mid k_1 \), \( p^m \mid k_2 \), we must have
	\[
	k_1 = p^m u, \quad k_2 = p^m v,
	\]
	for some \( u, v \in \mathbb{N} \). Then
	\[
	p^{2m} \mid (p^m u + p^m v) = p^m (u + v),
	\]
	which implies \( p^m \mid u + v \). Hence, \( u + v \geq p^m \), so
	\[
	k_1 + k_2 = p^m (u + v) \geq p^{2m}.
	\]
	Therefore,
	\[
	a_1 + j_p = k_1 p^m \geq p^{2m},
	\]
	but this contradicts the fact that \( a_1 + j_p < a_2 + s < p^{2m} \) for sufficiently large \( p \). Thus, Case 2 also leads to a contradiction.
	
	\medskip
	\noindent\textbf{Conclusion.} Both cases contradict the assumptions, so our earlier hypothesis must be false. Therefore, inequality~\eqref{eq10} holds, completing the proof.
	
	Suppose first that \( p^{2m} \mid (k_2 - k_1) \), then there exists \( m_1 \in \mathbb{N} \) such that
	\[
	k_2 = k_1 + m_1 p^{2m} > m_1 p^{2m} \geq p^2.
	\]
	
	Alternatively, if \( p^{2m} \mid (k_2 + k_1) \), then there exists \( m_2 \in \mathbb{N} \) such that
	\[
	k_2 + k_1 = m_2 p^m \quad \Rightarrow \quad k_2 \geq \frac{m_2 p^m + 1}{2} \geq \frac{p^2 + 1}{2}.
	\]
	Hence,
	\[
	a_2 + i_p = k_2 p^m \geq \frac{p^3 + p}{2} \geq \frac{(s+1)^3 + s + 1}{2},
	\]
	which implies
	\[
	a_2 \geq \frac{(s+1)^3 - s + 1}{2} > (s+1)^2.
	\]
	Since \( p > 2(s+1) \), it follows that
	\[
	a_2 \geq 4(s+1)^3.
	\]
	
	Therefore, we can estimate
	\[
	\frac{(s+1)^2 a_1}{2(a_2 - 1)^2}
	\leq \frac{(s+1)^2}{4(s+1)^3 - 1} \cdot \frac{a_1}{2(a_2 - 1)} < 1,
	\]
	which proves inequality~\eqref{eq10}.
	
	\smallskip
	Combining inequalities~\eqref{eq9} and~\eqref{eq10}, we conclude that inequality~\eqref{eq8} holds as required.
\end{proof}

\begin{lemma}\label{lem2}
	Suppose
	\[
	G(a_1, r) = \frac{r+1}{(a_1 + r + 1 - \eta_1)(a_1 - \eta_1)}, \quad
	G(a_2, s) = \frac{r+1}{(a_2 + s + 1 - \eta_2)(a_2 - \eta_2)},
	\]
	with
	\[
	\varepsilon_{a_1} < \eta_1 < \varepsilon_{a_1 + r}, \quad
	\varepsilon_{a_2} < \eta_2 < \varepsilon_{a_2 + s}.
	\]
	If \( G(a_1, r) = G(a_2, s) \), then
	\begin{equation}\label{e11}
		(r+1)\big[(2a_2 - 1)(2a_2 + 2s + 1) + 1\big]
		= (s+1)\big[(2a_1 - 1)(2a_1 + 2r + 1) + 1\big].
	\end{equation}
\end{lemma}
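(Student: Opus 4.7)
The plan is to derive \eqref{e11} as an immediate consequence of the algebraic identity~\eqref{eq7} combined with the bounds~\eqref{eq8} and~\eqref{eq8-2} established in the preceding lemma. The key observation is that
\[
(r+1)\big[(2a_2-1)(2a_2+2s+1)+1\big] - (s+1)\big[(2a_1-1)(2a_1+2r+1)+1\big]
\]
is a polynomial in the positive integers $a_1, a_2, r, s$ and is therefore itself an integer, while by~\eqref{eq7} it equals the difference
\[
(s+1)\bigl[(4a_1+2r)(1-2\eta_1)-1+(1-2\eta_1)^2\bigr] - (r+1)\bigl[(4a_2+2s)(1-2\eta_2)-1+(1-2\eta_2)^2\bigr],
\]
which involves the a priori irrational parameters $\eta_1, \eta_2$. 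Invoking~\eqref{eq8} and~\eqref{eq8-2}, each bracketed term has absolute value strictly less than~$1$, so the difference has absolute value strictly less than~$2$, forcing the integer left-hand side of~\eqref{e11} into $\{-1, 0, 1\}$.

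To rule out $\pm 1$ and close the argument, I would revisit the chain of inequalities in the proof of the preceding lemma. There, each term is in fact bounded by the finer quantity $\tfrac{(s+1)^2 a_1}{2(a_2-1)^2}$, and the forbidden-prime argument yields $a_2 \geq 4(s+1)^3$; substituting this lower bound makes the estimate strictly smaller than~$\tfrac{1}{2}$. Consequently, each bracketed term has absolute value strictly less than $\tfrac{1}{2}$, their difference has absolute value strictly less than~$1$, and the integer left-hand side of~\eqref{e11} must therefore be exactly~$0$, which yields the desired equality. The main obstacle is precisely this sharpening step: the stated inequalities~\eqref{eq8} and~\eqref{eq8-2} alone only give $|\text{LHS}| < 2$, which is not tight enough; recovering the stronger bound of~$\tfrac{1}{2}$ requires carefully re-examining the estimates underlying the preceding lemma, particularly the interplay between $a_2 \geq 4(s+1)^3$ and the bound $\tfrac{(s+1)^2 a_1}{2(a_2-1)^2}$.
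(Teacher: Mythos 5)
Your first half coincides with the paper's proof: both use identity~\eqref{eq7} together with \eqref{eq8} and \eqref{eq8-2} to conclude that the integer
\[
(r+1)\big[(2a_2-1)(2a_2+2s+1)+1\big]-(s+1)\big[(2a_1-1)(2a_1+2r+1)+1\big]
\]
has absolute value strictly less than $2$. Where you diverge is in how the values $\pm 1$ are excluded. The paper does this with a one-line parity observation: since $2a_2-1$ and $2a_2+2s+1$ are both odd, $(2a_2-1)(2a_2+2s+1)+1$ is even, and likewise for the other bracket, so the displayed quantity is an even integer; an even integer of absolute value less than $2$ is zero, and no sharpening of \eqref{eq8} or \eqref{eq8-2} is needed. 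Your alternative --- revisiting the proof of the preceding lemma to upgrade each bound from $1$ to $\tfrac12$ --- is viable: the paper's own chain of estimates there actually yields something on the order of $\tfrac{1}{8(s+1)}$ for the first bracket (via $\tfrac{(s+1)^2 a_1}{2(a_2-1)^2}$ combined with $a_2\ge 4(s+1)^3$ and $a_1<a_2$), and the analogous estimate for the second bracket (which is \emph{not} bounded by the same quantity $\tfrac{(s+1)^2 a_1}{2(a_2-1)^2}$, but rather by something like $\tfrac{(2s+1)(r+1)}{4(a_2+s)}$, also far below $\tfrac12$ under $a_2\ge 4(s+1)^3$ and $s>r$) is equally small. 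So your route closes the gap, at the cost of reopening and strengthening the previous lemma rather than using it as a black box; the paper's parity trick buys the same conclusion for free and is the cleaner argument. You correctly diagnosed that $|\cdot|<2$ alone is insufficient, which is the honest core of the matter --- but note the evenness of the brackets before committing to the heavier analytic repair.
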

\begin{proof}
	By inequalities~\eqref{eq8} and~\eqref{eq8-2}, we have
	\begin{align*}
		&\big|(s+1)\left[(4a_1 + 2r)(1 - 2\eta_1) - 1 + (1 - 2\eta_1)^2\right] \\
		&\quad - (r+1)\left[(4a_2 + 2s)(1 - 2\eta_2) - 1 + (1 - 2\eta_2)^2\right]\big| < 2.
	\end{align*}
	
	On the other hand, note that
	\[
	(r+1)\left[(2a_2 - 1)(2a_2 + 2s + 1) + 1\right]
	- (s+1)\left[(2a_1 - 1)(2a_1 + 2r + 1) + 1\right]
	\]
	is either zero or an even integer.
	
	Therefore, by identity~\eqref{eq7}, the quantity
	\[
	(s+1)\left[(4a_1 + 2r)(1 - 2\eta_1) - 1 + (1 - 2\eta_1)^2\right]
	- (r+1)\left[(4a_2 + 2s)(1 - 2\eta_2) - 1 + (1 - 2\eta_2)^2\right]
	\]
	must equal zero.
	
	Hence, we conclude that
	\[
	(r+1)\left[(2a_2 - 1)(2a_2 + 2s + 1) + 1\right]
	= (s+1)\left[(2a_1 - 1)(2a_1 + 2r + 1) + 1\right],
	\]
	as claimed.
\end{proof}

By Lemma~\ref{lem2}, we have
\begin{align*}
	(r+1)\big[(2a_2 - 1)(2a_2 + 2s + 1) + 1\big]
	= (s+1)\big[(2a_1 - 1)(2a_1 + 2r + 1) + 1\big].
\end{align*}

Note that
\begin{align*}
	(2a_2 - 1)(2a_2 + 2s + 1) + 1
	&= (2a_2 + s)^2 - (s + 1)^2 + 1 \\
	&= 4(s+1)\left[\frac{(a_2 + \frac{s}{2})^2}{s+1} - \frac{1}{4}\left(s + 1 - \frac{1}{s + 1}\right)\right].
\end{align*}

Similarly,
\begin{align*}
	(2a_1 - 1)(2a_1 + 2r + 1) + 1
	= 4(r+1)\left[\frac{(a_1 + \frac{r}{2})^2}{r+1} - \frac{1}{4}\left(r + 1 - \frac{1}{r + 1}\right)\right].
\end{align*}

Substituting these into the equality, we obtain
\begin{align}\label{eq11}
	\frac{(a_1 + \frac{r}{2})^2}{r+1} - \frac{1}{4}\left(r + 1 - \frac{1}{r + 1}\right)
	= \frac{(a_2 + \frac{s}{2})^2}{s+1} - \frac{1}{4}\left(s + 1 - \frac{1}{s + 1}\right).
\end{align}
	
	On the other hand, we expand the function \( f(a_1 + i) = \frac{1}{(a_1 + i)^2} \) at \( i = \frac{r}{2} \) using Taylor's formula:
	\begin{align*}
		\frac{1}{(a_1 + i)^2} &= \frac{1}{\left(a_1 + \tfrac{r}{2}\right)^2}
		- \frac{2(i - \tfrac{r}{2})}{\left(a_1 + \tfrac{r}{2}\right)^3}
		+ \frac{3(i - \tfrac{r}{2})^2}{\left(a_1 + \tfrac{r}{2}\right)^4}
		- \frac{4(i - \tfrac{r}{2})^3}{\left(a_1 + \tfrac{r}{2}\right)^5} \\
		&\quad + \frac{5(i - \tfrac{r}{2})^4}{\left(a_1 + \tfrac{r}{2}\right)^6}
		- \frac{6(i - \tfrac{r}{2})^5}{\left(a_1 + \tfrac{r}{2}\right)^7}
		+ \frac{7(i - \tfrac{r}{2})^6}{\left(a_1 + \tfrac{r}{2}\right)^8}
		- \frac{8(i - \tfrac{r}{2})^7}{\left(a_1 + \tfrac{r}{2}\right)^9} \\
		&\quad + \frac{9(i - \tfrac{r}{2})^8}{(a_1 + \theta_i)^{10}}, \quad \text{for some } \theta_i \in (a_1 + i,\, a_1 + \tfrac{r}{2}).
	\end{align*}
	
	Summing over \( i = 0, 1, \ldots, r \), and using the symmetry of the expansion about \( \frac{r}{2} \), the odd-power terms vanish, yielding
	\begin{align*}
		\sum_{i=0}^{r} \frac{1}{(a_1 + i)^2}
		&= \frac{r+1}{\left(a_1 + \tfrac{r}{2} \right)^2}
		+ \frac{3}{\left(a_1 + \tfrac{r}{2} \right)^4} \sum_{i=0}^{r} \left(i - \tfrac{r}{2} \right)^2
		+ \frac{5}{\left(a_1 + \tfrac{r}{2} \right)^6} \sum_{i=0}^{r} \left(i - \tfrac{r}{2} \right)^4 \\
		&\quad + \frac{7}{\left(a_1 + \tfrac{r}{2} \right)^8} \sum_{i=0}^{r} \left(i - \tfrac{r}{2} \right)^6
		+ \sum_{i=0}^{r} \frac{9(i - \tfrac{r}{2})^8}{(a_1 + \theta_i)^{10}}.
	\end{align*}
	
	Similarly, expanding around \( i = \frac{s}{2} \), we obtain
	\begin{align*}
		\sum_{i=0}^{s} \frac{1}{(a_2 + i)^2}
		&= \frac{s+1}{\left(a_2 + \tfrac{s}{2} \right)^2}
		+ \frac{3}{\left(a_2 + \tfrac{s}{2} \right)^4} \sum_{i=0}^{s} \left(i - \tfrac{s}{2} \right)^2
		+ \frac{5}{\left(a_2 + \tfrac{s}{2} \right)^6} \sum_{i=0}^{s} \left(i - \tfrac{s}{2} \right)^4 \\
		&\quad + \frac{7}{\left(a_2 + \tfrac{s}{2} \right)^8} \sum_{i=0}^{s} \left(i - \tfrac{s}{2} \right)^6
		+ \sum_{i=0}^{s} \frac{9(i - \tfrac{s}{2})^8}{(a_2 + \overline{\theta_i})^{10}}, \quad \text{with } \overline{\theta_i} \in (a_2 + i,\, a_2 + \tfrac{s}{2}).
	\end{align*}

\noindent By Lemma~\ref{2}, we have
\begin{align*}
	\sum_{i=0}^{r} \frac{1}{(a_1 + i)^2}
	&= \frac{r+1}{\left(a_1 + \tfrac{r}{2} \right)^2}
	+ \frac{(r+1)^3 - (r+1)}{4 \left(a_1 + \tfrac{r}{2} \right)^4} \\
	&\quad + \frac{3(r+1)^5 - 10(r+1)^3 + 7(r+1)}{48 \left(a_1 + \tfrac{r}{2} \right)^6} \\
	&\quad + \frac{3(r+1)^7 - 21(r+1)^5 + 49(r+1)^3 - 31(r+1)}{192 \left(a_1 + \tfrac{r}{2} \right)^8} \\
	&\quad + \sum_{i=0}^{r} \frac{9(i - \tfrac{r}{2})^8}{(a_1 + \theta_i)^{10}},
\end{align*}
and similarly,
\begin{align*}
	\sum_{i=0}^{s} \frac{1}{(a_2 + i)^2}
	&= \frac{s+1}{\left(a_2 + \tfrac{s}{2} \right)^2}
	+ \frac{(s+1)^3 - (s+1)}{4 \left(a_2 + \tfrac{s}{2} \right)^4} \\
	&\quad + \frac{3(s+1)^5 - 10(s+1)^3 + 7(s+1)}{48 \left(a_2 + \tfrac{s}{2} \right)^6} \\
	&\quad + \frac{3(s+1)^7 - 21(s+1)^5 + 49(s+1)^3 - 31(s+1)}{192 \left(a_2 + \tfrac{s}{2} \right)^8} \\
	&\quad + \sum_{i=0}^{s} \frac{9(i - \tfrac{s}{2})^8}{(a_2 + \overline{\theta}_i)^{10}}.
\end{align*}
	
Denote the difference terms as follows:
\begin{align*}
	R_1 &= \frac{r+1}{\left(a_1 + \frac{r}{2} \right)^2} - \frac{s+1}{\left(a_2 + \frac{s}{2} \right)^2}, \\
	R_2 &= \frac{(r+1)^3 - (r+1)}{4\left(a_1 + \frac{r}{2} \right)^4}
	- \frac{(s+1)^3 - (s+1)}{4\left(a_2 + \frac{s}{2} \right)^4}, \\
	R_3 &= \frac{(r+1)^5}{16\left(a_1 + \frac{r}{2} \right)^6}
	- \frac{(s+1)^5}{16\left(a_2 + \frac{s}{2} \right)^6}, \\
	R_4 &= \frac{5}{24} \left[
	\frac{(s+1)^3}{\left(a_2 + \frac{s}{2} \right)^6}
	- \frac{(r+1)^3}{\left(a_1 + \frac{r}{2} \right)^6}
	\right], \\
	R_5 &= \frac{7}{48} \left[
	\frac{r+1}{\left(a_1 + \frac{r}{2} \right)^6}
	- \frac{s+1}{\left(a_2 + \frac{s}{2} \right)^6}
	\right], \\
	R_6 &= \frac{1}{64} \left[
	\frac{(r+1)^7}{\left(a_1 + \frac{r}{2} \right)^8}
	- \frac{(s+1)^7}{\left(a_2 + \frac{s}{2} \right)^8}
	\right], \\
	R_7 &= \frac{
		21(s+1)^5 - 49(s+1)^3 + 31(s+1)
	}{192\left(a_2 + \tfrac{s}{2} \right)^8}
	- \frac{
		21(r+1)^5 - 49(r+1)^3 + 31(r+1)
	}{192\left(a_1 + \tfrac{r}{2} \right)^8} \\
	&\quad + \sum_{i=0}^{r} \frac{9\left(i - \frac{r}{2} \right)^8}{(a_1 + \theta_i)^{10}}
	- \sum_{i=0}^{s} \frac{9\left(i - \frac{s}{2} \right)^8}{(a_2 + \overline{\theta_i})^{10}}.
\end{align*}
	
	\begin{corollary}
		If \( G(a_1, r) = G(a_2, s) \), then
		\begin{equation}\label{key1}
			R_1 + R_2 + R_3 + R_4 + R_5 + R_6 + R_7 = 0.
		\end{equation}
	\end{corollary}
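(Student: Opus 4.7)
The plan is to derive the identity \eqref{key1} directly by forming the difference $G(a_1,r)-G(a_2,s)$ from the two Taylor expansions displayed just above the corollary. Since the hypothesis gives $G(a_1,r)=G(a_2,s)$, this difference vanishes, so the entire content of the corollary is a bookkeeping verification that the four "closed-form" terms and the final Taylor-remainder term, when subtracted in the two variables, regroup into exactly the seven quantities $R_1,\ldots,R_7$.

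I would proceed term by term according to the power of the midpoint denominator. The $(a+r/2)^2$ level immediately yields $R_1$, and the $(a+r/2)^4$ level (keeping the full combination $(r+1)^3-(r+1)$ intact) yields $R_2$. For the $(a+r/2)^6$ level I would split the numerator $3(r+1)^5-10(r+1)^3+7(r+1)$ (and its $s$-analogue) into its three monomial pieces, absorbing the coefficients $\tfrac{3}{48}=\tfrac1{16}$, $\tfrac{10}{48}=\tfrac{5}{24}$, and $\tfrac{7}{48}$; the $(r+1)^5$ piece is $R_3$, the $(r+1)^3$ piece (after flipping the sign, since the overall sign in front of $10$ is negative) becomes $R_4$, and the $(r+1)$ piece is $R_5$. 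The sign flip is what accounts for $R_4$ and $R_5$ being written with $s$ first.

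For the $(a+r/2)^8$ level I would separate off the leading monomial $3(r+1)^7$ from the three lower ones; the leading monomial with coefficient $\tfrac{3}{192}=\tfrac1{64}$ is $R_6$, while the three remaining monomials $-21(r+1)^5+49(r+1)^3-31(r+1)$ (and their $s$-analogues) combine, after sign swap, into the first two lines of $R_7$. Finally, the ninth-order Taylor remainders $\sum_i 9(i-r/2)^8/(a_1+\theta_i)^{10}$ and the corresponding $s$-sum become the last two terms of $R_7$. Adding everything yields
\[
R_1+R_2+R_3+R_4+R_5+R_6+R_7 \;=\; G(a_1,r)-G(a_2,s) \;=\; 0,
\]
which is \eqref{key1}.

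There is no real obstacle; the statement is a purely algebraic reorganization of the two previously established Taylor expansions. The only step requiring any care is keeping track of the sign conventions in $R_4$, $R_5$, and the first part of $R_7$, where the roles of $(a_1,r)$ and $(a_2,s)$ are swapped relative to the other $R_i$, and the compatibility of the rational coefficients $\tfrac{1}{16},\tfrac{5}{24},\tfrac{7}{48},\tfrac{1}{64},\tfrac{1}{192}$ with the fractions appearing in the Taylor numerators from Lemma~\ref{2}.
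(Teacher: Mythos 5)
Your proposal is correct and is exactly the argument the paper intends: the corollary is nothing more than the term-by-term regrouping of $G(a_1,r)-G(a_2,s)$, computed from the two Taylor expansions, into the seven quantities $R_1,\dots,R_7$, and the paper states it without further proof for precisely this reason. (One trivial slip in your sign bookkeeping: $R_5$ comes from the positive coefficient $+7(r+1)$ and is therefore written with the $r$-term first and needs no sign flip; only $R_4$ and the polynomial part of $R_7$ have the roles of $(a_1,r)$ and $(a_2,s)$ swapped.)
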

	
	However, by explicit calculation and estimation of each term \( R_i \), we obtain the following contradiction:
	
	\begin{theorem}
		Under the same assumptions, the expression satisfies
		\[
		R_1 + R_2 + R_3 + R_4 + R_5 + R_6 + R_7 > 0.
		\]
	\end{theorem}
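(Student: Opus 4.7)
The plan is to derive a contradiction with the corollary $R_1+R_2+\cdots+R_7=0$ by showing that this sum is strictly positive. Set $u=r+1$, $v=s+1$, $A=a_1+r/2$, $B=a_2+s/2$. My first move is to establish $v>u$. Equation \eqref{eq11} reads $A^2/u-(u-1/u)/4 = B^2/v-(v-1/v)/4$; since $a_2>a_1+r$ forces $B>A$, a short case analysis on the sign of $u-v$ rules out $u\geq v$ and yields $v>u$. A direct computation using \eqref{eq11} then gives
\[
R_1 = \frac{(v-u)(uv+1)}{4A^2B^2} > 0.
\]

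The second step is to combine $R_1$ and $R_2$ into a manifestly positive expression. Introducing $X=A^2/u$, $Y=B^2/v$, $p=u-1/u$, $q=v-1/v$, the identity \eqref{eq11} becomes $Y-X=(q-p)/4$; substituting this into $R_2=p/(4X^2)-q/(4Y^2)$ and combining with $R_1=(q-p)/(4XY)$ gives, after simplification,
\[
R_1+R_2 \;=\; \frac{q^2-p^2}{16\,XY^2} \;+\; \frac{p(q-p)^2}{64\,X^2Y^2}.
\]
Since $v>u\geq 2$ forces $q>p>0$, this is strictly positive; reverting to the original variables yields a clean lower bound of order $uv^2(q^2-p^2)/(A^2B^4)$.

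The third step is to exploit the bound $a_2\geq 4(s+1)^3$ proved in the previous lemma (hence $B\geq 4v^3$), together with $A^2<uB^2/v$ coming from $X<Y$, to extract sharp upper bounds on the remaining terms. Each of $R_3,R_4,R_5$ is a combination of terms of the form $u^k/A^6$ or $v^k/B^6$ with $k\leq 5$, while $R_6+R_7$ contributes $u^k/A^8$, $v^k/B^8$, and Taylor remainders of order $u^9/A^{10}$ and $v^9/B^{10}$. Substituting the size bounds on $A$ and $B$ makes all of these strictly smaller, by a definite constant factor, than the lower bound for $R_1+R_2$ derived in the previous step.

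The principal obstacle is the near-degenerate regime where $v-u$ is small and $R_1$ itself is therefore small. Here one must invoke the integrality of $r,s$ to force $v-u\geq 1$, and then rely on the cushion provided by $B\geq 4v^3$ to argue that the positive contribution $uv^2(q^2-p^2)/(16A^2B^4)$ to $R_1+R_2$ still strictly exceeds the sum of absolute values of $R_3,\ldots,R_7$. Once this is in hand, the strict inequality $R_1+\cdots+R_7>0$ contradicts the corollary, thereby eliminating the hypothesis $G(a_1,r)=G(a_2,s)$ with $a_1+r<a_2$ and completing the proof of Theorem \ref{1.2}.
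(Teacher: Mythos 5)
Your first two steps are correct and in fact coincide with the paper's own computation, just in cleaner notation: with $X=A^2/u$, $Y=B^2/v$, $p=u-1/u$, $q=v-1/v$, your identity
\[
R_1+R_2=\frac{q^2-p^2}{16XY^2}+\frac{p(q-p)^2}{64X^2Y^2}
\]
is exactly the paper's expression for $R_1+R_2$ (there $L=(q-p)/4$ and $\tfrac{r(r+2)}{4(r+1)}=p/4$), and your formula for $R_1$ and the deduction $v>u$ both check out.

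The gap is in your third step. You propose to bound $|R_3|,\dots,|R_7|$ by the positive lower bound $\tfrac{q^2-p^2}{16XY^2}=\tfrac{(v^2-u^2+v^{-2}-u^{-2})\,uv^2}{16A^2B^4}$ for $R_1+R_2$, using $B\ge 4v^3$ and $X<Y$. This cannot work for $R_3$. From \eqref{eq11} one has $u/A^2=(v/B^2)\bigl(1+uL/A^2\bigr)$ with $uL/A^2<1/63$, so $X$ and $Y$ agree up to a factor $1+O(1/63)$, and therefore
\[
R_3=\frac{1}{16}\left(\frac{u^2}{X^3}-\frac{v^2}{Y^3}\right)\approx\frac{u^2-v^2}{16\,Y^3}\approx-\,\frac{q^2-p^2}{16XY^2},
\]
i.e.\ $R_3$ cancels the leading term of $R_1+R_2$ almost exactly rather than being dominated by it. Concretely, the single term $v^5/(16B^6)$ appearing in $R_3$ already exceeds your lower bound for $R_1+R_2$ by a factor of roughly $v^2/(v^2-u^2)$, which is about $v/2$ in the critical case $s=r+1$; the hypothesis $B\ge 4v^3$ gives no leverage here because both quantities scale as $B^{-6}$. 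This near-cancellation is precisely why the paper does not argue by the triangle inequality at this stage: it substitutes $u/A^2=(v/B^2)(1+uL/A^2)$ into $R_3$, $R_4$, $R_5$ and tracks the exact cancellations, after which the surviving positive term is only $\tfrac{1}{12}\bigl(u^{-2}-v^{-2}\bigr)v^3/B^6$ --- smaller than your proposed lower bound by a factor of order $u^2v^2$ --- and only the genuinely lower-order quantities $R_6$ and $R_7$ are estimated crudely using $a_2\ge 4(s+1)^3$. As written, your outline does not close; repairing it would require redoing the second-order expansion of $R_3+R_4+R_5$ that constitutes the bulk of the paper's proof.
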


\begin{proof}
	 We rewrite (\ref{eq11}) as
	 \begin{eqnarray}\label{eq12}
	 	\frac{s+1}{(a_{2}+\frac{s}{2})^{2}}=\frac{r+1}{(a_{1}+\frac{r}{2})^{2}+(r+1)L},
	 \end{eqnarray}
	 where
	 \begin{align}\label{L}
	 	L=\frac{s(s+2)}{4(s+1)}-\frac{r(r+2)}{4(r+1)}
	 \end{align}

	 By (\ref{eq12}) and (\ref{L}), we rewrite $R_1$ as
	 \begin{align*}
	 	R_{1}&=\frac{r+1}{(a_{1}+\frac{r}{2})^{2}}-\frac{r+1}{(a_{1}+\frac{r}{2})^{2}+(r+1)L}\\
	 	&=\frac{(r+1)L}{(a_{1}+\frac{r}{2})^{2}}\cdot\frac{(s+1)}{(a_{2}+\frac{s}{2})^2}.
	 \end{align*}
	 This implies 
	 \begin{align*}
	 	\frac{r+1}{(a_{1}+\frac{r}{2})^{2}}
	 	=\frac{(s+1)}{(a_{2}+\frac{s}{2})^2}\left[1+\frac{(r+1)L}{(a_{1}+\frac{r}{2})^{2}}\right].
	 \end{align*}
	 According to the above equation, we rewrite  $R_2$ as
	 \begin{align*}
	 	R_{2}=&\frac{(r+1)^3-(r+1)}{4(a_{1}+\frac{r}{2})^{4}}-\frac{(s+1)^3-(s+1)}{4(a_{2}+\frac{s}{2})^{4}}\\
	 	=&\frac{r(r+2)}{4(r+1)}\cdot\frac{(r+1)^{2}}{(a_{1}+\frac{r}{2})^{4}}-\frac{s(s+2)}{4(s+1)}\cdot\frac{(s+1)^{2}}{(a_{2}+\frac{s}{2})^{4}}\\ =&-L\frac{(s+1)^{2}}{(a_{2}+\frac{s}{2})^{4}}+\frac{r(r+2)}{2(r+1)}\cdot\frac{(s+1)^{2}(r+1)L}{(a_{2}+\frac{s}{2})^{4}(a_{1}+\frac{r}{2})^{2}}\\
	 	&+\frac{r(r+2)}{4(r+1)}\cdot\frac{(s+1)^{2}(r+1)^{2}L^{2}}{(a_{2}+\frac{s}{2})^{4}(a_{1}+\frac{r}{2})^{4}},    \end{align*} 
	 then 
	 \begin{align*}
	 	R_{1}+R_2&=L\left[L+\frac{r(r+2)}{2(r+1)}\right]\cdot\frac{(s+1)^{2}(r+1)}{(a_{2}+\frac{s}{2})^{4}(a_{1}+\frac{r}{2})^{2}}\\
	 	&+\frac{r(r+2)}{4(r+1)}\cdot\frac{(s+1)^{2}(r+1)^{2}L^{2}}{(a_{2}+\frac{s}{2})^{4}(a_{1}+\frac{r}{2})^{4}}.
	 \end{align*}
	We rewrite \( R_3 \) as
	\begin{align*}
		R_3 &= \frac{(r+1)^5}{16\left(a_1 + \tfrac{r}{2}\right)^6}
		- \frac{(s+1)^5}{16\left(a_2 + \tfrac{s}{2}\right)^6} \\
		&= \frac{(r+1)^2}{16} \cdot \frac{(r+1)^3}{\left(a_1 + \tfrac{r}{2}\right)^6}
		- \frac{(s+1)^2}{16} \cdot \frac{(s+1)^3}{\left(a_2 + \tfrac{s}{2}\right)^6}.
	\end{align*}
	
	Using previous substitutions and Taylor expansion approximations, this becomes
	\begin{align*}
		R_3 &= \frac{(r+1)^2 - (s+1)^2}{16} \cdot \frac{(s+1)^3}{\left(a_2 + \tfrac{s}{2}\right)^6}
		+ \frac{3(r+1)^2(s+1)^3(r+1)L}{16\left(a_2 + \tfrac{s}{2}\right)^6 \left(a_1 + \tfrac{r}{2}\right)^2} \\
		&\quad + \frac{3(r+1)^2(s+1)^3(r+1)^2 L^2}{16\left(a_2 + \tfrac{s}{2}\right)^6 \left(a_1 + \tfrac{r}{2}\right)^4}
		+ \frac{(r+1)^2(s+1)^3(r+1)^3 L^3}{16\left(a_2 + \tfrac{s}{2}\right)^6 \left(a_1 + \tfrac{r}{2}\right)^6}.
	\end{align*}
	
	Now observe that
	\[
	L\left(L + \frac{r(r+2)}{2(r+1)}\right)
	= \frac{1}{16} \left[(s+1)^2 - (r+1)^2 + \frac{1}{(s+1)^2} - \frac{1}{(r+1)^2} \right],
	\]
	which implies
	\begin{align*}
		R_1 + R_2 + R_3 &= \frac{1}{16} \left[\frac{1}{(s+1)^2} - \frac{1}{(r+1)^2} \right] \cdot
		\frac{(s+1)^2(r+1)}{\left(a_2 + \tfrac{s}{2} \right)^4 \left(a_1 + \tfrac{r}{2} \right)^2} \\
		&\quad + \frac{(s+1)^2 - (r+1)^2}{16} \cdot
		\frac{(s+1)^3(r+1)L}{\left(a_2 + \tfrac{s}{2} \right)^6 \left(a_1 + \tfrac{r}{2} \right)^2} \\
		&\quad + \frac{r(r+2)}{4(r+1)} \cdot
		\frac{(s+1)^2(r+1)^2 L^2}{\left(a_2 + \tfrac{s}{2} \right)^4 \left(a_1 + \tfrac{r}{2} \right)^4} \\
		&\quad + \frac{3(r+1)^2(s+1)^3(r+1)L}{16\left(a_2 + \tfrac{s}{2}\right)^6 \left(a_1 + \tfrac{r}{2}\right)^2} \\
		&\quad + \frac{3(r+1)^2(s+1)^3(r+1)^2 L^2}{16\left(a_2 + \tfrac{s}{2}\right)^6 \left(a_1 + \tfrac{r}{2}\right)^4} \\
		&\quad + \frac{(r+1)^2(s+1)^3(r+1)^3 L^3}{16\left(a_2 + \tfrac{s}{2}\right)^6 \left(a_1 + \tfrac{r}{2}\right)^6}.
	\end{align*}
	We rewrite \( R_4 \) as
	\begin{align*}
		R_4 &= \frac{5}{24} \left[ \frac{(s+1)^3}{\left(a_2 + \tfrac{s}{2}\right)^6} - \frac{(r+1)^3}{\left(a_1 + \tfrac{r}{2}\right)^6} \right] \\
		&= -\frac{5}{24} \left[
		\frac{3(s+1)^3(r+1)L}{\left(a_2 + \tfrac{s}{2}\right)^6\left(a_1 + \tfrac{r}{2}\right)^2}
		+ \frac{3(s+1)^3(r+1)^2 L^2}{\left(a_2 + \tfrac{s}{2}\right)^6\left(a_1 + \tfrac{r}{2}\right)^4}
		+ \frac{(s+1)^3(r+1)^3 L^3}{\left(a_2 + \tfrac{s}{2}\right)^6\left(a_1 + \tfrac{r}{2}\right)^6}
		\right].
	\end{align*}
	
	Therefore, the combined expression \( R_1 + R_2 + R_3 + R_4 \) becomes:
	\begin{align*}
		R_1 + R_2 + R_3 + R_4 &= \frac{1}{16} \left( \frac{1}{(s+1)^2} - \frac{1}{(r+1)^2} \right)
		\cdot \frac{(s+1)^2(r+1)}{\left(a_2 + \tfrac{s}{2}\right)^4\left(a_1 + \tfrac{r}{2}\right)^2} \\
		&\quad + \frac{(s+1)^2 - (r+1)^2}{16}
		\cdot \frac{(s+1)^3(r+1)L}{\left(a_2 + \tfrac{s}{2}\right)^6\left(a_1 + \tfrac{r}{2}\right)^2} \\
		&\quad + \frac{r(r+2)}{4(r+1)}
		\cdot \frac{(s+1)^2(r+1)^2 L^2}{\left(a_2 + \tfrac{s}{2}\right)^4\left(a_1 + \tfrac{r}{2}\right)^4} \\
		&\quad + \left( \frac{3(r+1)^2}{16} - \frac{5}{8} \right)
		\cdot \frac{(s+1)^3(r+1)L}{\left(a_2 + \tfrac{s}{2}\right)^6\left(a_1 + \tfrac{r}{2}\right)^2} \\
		&\quad + \left( \frac{3(r+1)^2}{16} - \frac{5}{8} \right)
		\cdot \frac{(s+1)^3(r+1)^2 L^2}{\left(a_2 + \tfrac{s}{2}\right)^6\left(a_1 + \tfrac{r}{2}\right)^4} \\
		&\quad + \left( \frac{(r+1)^2}{16} - \frac{5}{24} \right)
		\cdot \frac{(s+1)^3(r+1)^3 L^3}{\left(a_2 + \tfrac{s}{2}\right)^6\left(a_1 + \tfrac{r}{2}\right)^6}.
	\end{align*}
	
	Similarly, we rewrite \( R_5 \) as
	\begin{align*}
		R_5 &= \frac{7}{48} \left[ \frac{r+1}{\left(a_1 + \tfrac{r}{2}\right)^6} - \frac{s+1}{\left(a_2 + \tfrac{s}{2}\right)^6} \right] \\
		&= \frac{7}{48} \left[
		\left( \frac{1}{(r+1)^2} - \frac{1}{(s+1)^2} \right)
		\cdot \frac{(s+1)^3}{\left(a_2 + \tfrac{s}{2} \right)^6}
		+ \frac{3(s+1)^3(r+1)L}{(r+1)^2 \left(a_2 + \tfrac{s}{2}\right)^6 \left(a_1 + \tfrac{r}{2}\right)^2} \right. \\
		&\quad + \left. \frac{3(s+1)^3(r+1)^2 L^2}{(r+1)^2 \left(a_2 + \tfrac{s}{2}\right)^6 \left(a_1 + \tfrac{r}{2}\right)^4}
		+ \frac{(s+1)^3(r+1)^3 L^3}{(r+1)^2 \left(a_2 + \tfrac{s}{2}\right)^6 \left(a_1 + \tfrac{r}{2}\right)^6}
		\right].
	\end{align*}
	Since
	\begin{align*}    
		&\frac{1}{16}\left( \frac{1}{(r+1)^2} - \frac{1}{(s+1)^2} \right)
		\cdot \left[
		\frac{(s+1)^3}{\left(a_2 + \tfrac{s}{2} \right)^6}
		- \frac{(s+1)^2(r+1)}{\left(a_2 + \tfrac{s}{2} \right)^4 \left(a_1 + \tfrac{r}{2} \right)^2}
		\right]\\
		=& -\frac{1}{16} \left( \frac{1}{(r+1)^2} - \frac{1}{(s+1)^2} \right)
		\cdot \frac{(s+1)^3 (r+1) L}{\left(a_2 + \tfrac{s}{2} \right)^6 \left(a_1 + \tfrac{r}{2} \right)^2},
	\end{align*}
	we obtain the following expression for \( R_1 + R_2 + R_3 + R_4 + R_5 \):
	\begin{align*}
		&R_1 + R_2 + R_3 + R_4 + R_5 \\
		= &\frac{1}{12} \left( \frac{1}{(r+1)^2} - \frac{1}{(s+1)^2} \right)
		\cdot \frac{(s+1)^3}{\left(a_2 + \tfrac{s}{2} \right)^6} \\
		&+ \frac{1}{16} \left[
		(s+1)^2 + 2(r+1)^2 - 10 + \frac{6}{(r+1)^2} + \frac{1}{(s+1)^2}
		\right] \cdot \frac{(s+1)^3 (r+1) L}{\left(a_2 + \tfrac{s}{2} \right)^6 \left(a_1 + \tfrac{r}{2} \right)^2} \\
		&+ \frac{r(r+2)}{4(r+1)} \cdot \frac{(s+1)^2 (r+1)^2 L^2}{\left(a_2 + \tfrac{s}{2} \right)^4 \left(a_1 + \tfrac{r}{2} \right)^4} \\
		&+ \left( \frac{3(r+1)^2}{16} - \frac{5}{8} + \frac{7}{16(r+1)^2} \right)
		\cdot \frac{(s+1)^3 (r+1)^2 L^2}{\left(a_2 + \tfrac{s}{2} \right)^6 \left(a_1 + \tfrac{r}{2} \right)^4} \\
		&+ \left( \frac{(r+1)^2}{16} - \frac{5}{24} + \frac{7}{48(r+1)^2} \right)
		\cdot \frac{(s+1)^3 (r+1)^3 L^3}{\left(a_2 + \tfrac{s}{2} \right)^6 \left(a_1 + \tfrac{r}{2} \right)^6}.
	\end{align*}
	Since
	\begin{align*}
		&(s+1)^2 + 2(r+1)^2 - 10 + \frac{6}{(r+1)^2} + \frac{1}{(s+1)^2} > 0, \\
		&\frac{3(r+1)^2}{16} - \frac{5}{8} + \frac{7}{16(r+1)^2} > 0, \\
		&\frac{(r+1)^2}{16} - \frac{5}{24} + \frac{7}{48(r+1)^2} > 0,
	\end{align*}
	we obtain
	\begin{align*}
		R_1 + R_2 + R_3 + R_4 + R_5
		&> \frac{1}{12} \left( \frac{1}{(r+1)^2} - \frac{1}{(s+1)^2} \right)
		\cdot \frac{(s+1)^3}{\left( a_2 + \tfrac{s}{2} \right)^6} \\
		&= \frac{(s - r)(s + r + 2)(s + 1)}{12(r+1)^2} \cdot \frac{1}{\left( a_2 + \tfrac{s}{2} \right)^6} \\
		&> \frac{s - r}{6\left( a_2 + \tfrac{s}{2} \right)^6}.
	\end{align*}
	
	\medskip
	
	Next, we estimate
	\begin{align*}
		R_6 &= \frac{1}{64} \left[ \frac{(r+1)^7}{\left( a_1 + \tfrac{r}{2} \right)^8}
		- \frac{(s+1)^7}{\left( a_2 + \tfrac{s}{2} \right)^8} \right] \\
		&= \frac{1}{64} \left[ (r+1)^3 \cdot \frac{(r+1)^4}{\left( a_1 + \tfrac{r}{2} \right)^8}
		- (s+1)^3 \cdot \frac{(s+1)^4}{\left( a_2 + \tfrac{s}{2} \right)^8} \right] \\
		&> \frac{(s+1)^4}{64 \left( a_2 + \tfrac{s}{2} \right)^8}
		\cdot \left[ (r+1)^3 - (s+1)^3 \right] \\
		&= \frac{(s+1)^4(r - s)}{64 \left( a_2 + \tfrac{s}{2} \right)^8}
		\cdot \left[ (r+1)(s+2) + (s+1)^2 \right] \\
		&> \frac{(s+1)^6(r - s)}{32 \left( a_2 + \tfrac{s}{2} \right)^8} \\
		&> \frac{r - s}{512 \left( a_2 + \tfrac{s}{2} \right)^6}.
	\end{align*}
	
	\medskip
	
	Therefore, we conclude
	\[
	R_1 + R_2 + R_3 + R_4 + R_5 + R_6 > \frac{s - r}{7 \left( a_2 + \tfrac{s}{2} \right)^6}.
	\]
	
Since
\[
21(s+1)^5 - 49(s+1)^3 + 31(s+1) > 0,
\qquad
49(r+1)^3 - 31(r+1) > 0,
\]
and
\begin{align*}
	R_7 &= \frac{21(s+1)^5 - 49(s+1)^3 + 31(s+1)}{192\left(a_2 + \tfrac{s}{2}\right)^8}
	- \frac{21(r+1)^5 - 49(r+1)^3 + 31(r+1)}{192\left(a_1 + \tfrac{r}{2}\right)^8} \\
	&\quad + \sum_{i=0}^{r} \frac{9(i - \tfrac{r}{2})^8}{(a_1 + \theta_i)^{10}}
	- \sum_{i=0}^{s} \frac{9(i - \tfrac{s}{2})^8}{(a_2 + \overline{\theta}_i)^{10}},
\end{align*}
we obtain the bound
\begin{align*}
	R_7
	&> -\frac{7(r+1)^5}{64\left(a_1 + \tfrac{r}{2}\right)^8}
	- \frac{(s+1)^9}{256\, a_2^{10}} \\
	&= -\frac{7(r+1)(s+1)^4}{64\left(a_2 + \tfrac{s}{2}\right)^8}
	\left(1 + \frac{(r+1)L}{\left(a_1 + \tfrac{r}{2} \right)^2} \right)^4
	- \frac{(s+1)^9}{256\left(a_2 + \tfrac{s}{2} \right)^{10}}
	\left( \frac{a_2 + \tfrac{s}{2}}{a_2} \right)^{10}.
\end{align*}
	
	Then we rewrite (\ref{eq11}) as
	\begin{align*}\label{e12}
		\frac{r+1}{(a_{1}+\frac{r}{2})^{2}}=\frac{s+1}{(a_{2}+\frac{s}{2})^{2}-(s+1)L}.
	\end{align*}
	Due to
	\begin{align*}
		L=\frac{1}{4}\left[s+1-\frac{1}{s+1}-(r+1)+\frac{1}{r+1}\right]
		<\frac{s+1}{4},
	\end{align*}
	we get
	\begin{align*}
		\frac{(r+1)L}{(a_{1}+\frac{r}{2})^{2}}<\frac{(s+1)^2}{4(a_{2}+\frac{s}{2})^{2}-(s+1)^2}
		\le \frac{(s+1)^2}{64(s+1)^{6}-(s+1)^2}
		<\frac{1}{63} .
	\end{align*}
	and 
	\begin{align*}\frac{a_{2}+\frac{s}{2}}{a_{2}}=1+\frac{s}{2a_{2}}<1+\frac{s}{8(s+1)^3}<1+\frac{1}{64}.\end{align*}
	Hence
	\begin{align*}
		R_{7}&>-\frac{7(r+1)(s+1)^{4}}{64(a_{2}+\frac{s}{2})^{8}}\left(1+\frac{1}{63}\right)^{4}-\frac{(s+1)^{9}}{256(a_{2}+\frac{s}{2})^{10}}\left(1+\frac{1}{64}\right)^{10}\\
		&>-\frac{7(s+1)^{5}}{32(a_{2}+\frac{s}{2})^{8}}-\frac{(s+1)^{9}}{128(a_{2}+\frac{s}{2})^{10}}\\
		&>-\frac{7}{512(s+1)(a_{2}+\frac{s}{2})^{6}}-\frac{1}{32768(s+1)^{3}(a_{2}+\frac{s}{2})^{10}}.
	\end{align*}
	Then we have
	\begin{align*}
		&R_{1}+R_2+R_3+R_4+R_5+R_6+R_7\\
		>&\frac{s-r}{7(a_{2}+\frac{s}{2})^{6}}-\frac{7}{512(s+1)(a_{2}+\frac{s}{2})^{6}}-\frac{1}{32768(s+1)^{3}(a_{2}+\frac{s}{2})^{10}}\\
		>&0.
	\end{align*}
\end{proof}
    
	\begin{corollary}\label{co3.9}
		Let \( G(a_1, r) \) and \( G(a_2, s) \) be defined as before. Suppose that \( a_2 > a_1 + r \) and \( s > r \). Then it is impossible for \( G(a_1, r) = G(a_2, s) \).
	\end{corollary}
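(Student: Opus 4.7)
The plan is to proceed by contradiction, directly combining the two results that immediately precede the statement. Suppose that $G(a_1, r) = G(a_2, s)$ under the hypotheses $a_2 > a_1 + r$ and $s > r$. On one hand, the preceding Corollary (equation \eqref{key1}), obtained by equating the two Taylor expansions around the respective midpoints $a_1 + r/2$ and $a_2 + s/2$ and grouping the resulting terms into $R_1, \dots, R_7$, forces
\[
R_1 + R_2 + R_3 + R_4 + R_5 + R_6 + R_7 = 0.
\]
On the other hand, the preceding Theorem, proved by a detailed sign analysis of each $R_i$ (using the substitution \eqref{eq12}, the bound $L < (s+1)/4$, and the lower bound $a_2 \ge 4(s+1)^3$ inherited from the earlier prime-avoidance argument), shows that under the same hypotheses
\[
R_1 + R_2 + R_3 + R_4 + R_5 + R_6 + R_7 > 0.
\]
These two conclusions are incompatible, which contradicts the assumption $G(a_1, r) = G(a_2, s)$ and establishes the corollary.

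Before invoking the two preceding results, I would first verify that their standing hypotheses are indeed satisfied in the present situation. By Remark \ref{re3.4} we may assume $a_1 + r < a_2$, which is given. The condition $s > r$ is also explicitly assumed. The auxiliary condition $a_2 \ge 4(s+1)^3$, used in the proof of the Theorem to dominate the negative contribution coming from $R_7$, follows from the prime-free analysis carried out earlier in Lemma 3.5 (via Lemmas \ref{r3} and \ref{9}), so no extra work is required here. Once the hypotheses are in place, both the identity $\sum R_i = 0$ and the inequality $\sum R_i > 0$ can be applied as black boxes and the contradiction is immediate.

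I do not expect any substantive obstacle, since the heavy analytic and arithmetic work has already been discharged: the algebraic identity producing the equality is a consequence of the Taylor symmetry argument, and the delicate estimate producing the strict inequality is the content of the Theorem. The only thing to be careful about is ensuring that the bound $(s-r)/[7(a_2 + s/2)^6]$ obtained for $R_1 + \cdots + R_6$ genuinely dominates the negative contribution from $R_7$, but this is precisely what the Theorem already certifies, so the proof of the corollary reduces to a one-line contradiction.
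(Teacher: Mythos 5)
Your proposal is correct and follows exactly the paper's own argument: assume $G(a_1,r)=G(a_2,s)$, invoke the corollary giving $\sum R_i = 0$ and the theorem giving $\sum R_i > 0$ (noting that the hypotheses $s>r$ and $a_2 \ge 4(s+1)^3$ are available from the earlier analysis), and conclude by contradiction. No substantive differences from the paper's proof.
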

	
	\begin{proof}
		Suppose, for contradiction, that \( G(a_1, r) = G(a_2, s) \). By Corollary~\ref{key1}, we then have
		\[
		R_1 + R_2 + R_3 + R_4 + R_5 + R_6 + R_7 = 0.
		\]
		However, combining the estimates from the previous sections yields
		\[
		R_1 + \cdots + R_7
		> \frac{s - r}{7\left( a_2 + \tfrac{s}{2} \right)^6}
		- \frac{7}{512(s+1)\left( a_2 + \tfrac{s}{2} \right)^6}
		- \frac{1}{32768(s+1)^3\left( a_2 + \tfrac{s}{2} \right)^{10}} > 0,
		\]
		provided \( s > r \) and \( a_2 \ge 4(s+1)^3 \). This contradicts the assumption that the sum equals zero. Therefore, the equality \( G(a_1, r) = G(a_2, s) \) cannot hold.
	\end{proof}

\noindent
Lastly, combining Lemma~\ref{lem1}, Remark~\ref{re3.4}, and Corollary~\ref{co3.9}, we obtain the main result of this paper:
\begin{theorem}\label{mainTH}
	No two partial sums of the 2-series are equal; that is, there do not exist integers \( m \leq n \) and \( x \leq y \), with \( (m, n) \neq (x, y) \), such that
	\[
	\frac{1}{m^2} + \frac{1}{(m+1)^2} + \cdots + \frac{1}{n^2} = \frac{1}{x^2} + \frac{1}{(x+1)^2} + \cdots + \frac{1}{y^2}.
	\]
\end{theorem}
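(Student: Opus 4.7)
The plan is to argue by contradiction, treating the claimed equality as $G(a_1, r) = G(a_2, s)$ where $a_1 = m$, $r = n - m$, $a_2 = x$, $s = y - x$. After relabeling we may assume $a_1 \le a_2$. If $a_1 = a_2$ then the two sums share a common starting point, so equality with distinct lengths is ruled out by positivity of the discarded terms; hence $a_1 = a_2$ forces $r = s$, giving $(m,n) = (x,y)$ and a contradiction. So I may suppose $a_1 < a_2$ going forward.

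Before invoking the heavy machinery, I would dispatch the cases $s \le r$ by a crude monotonicity comparison. If $s = r$, then term-by-term $\frac{1}{(a_1+i)^2} > \frac{1}{(a_2+i)^2}$ for each $i$, so $G(a_1,r) > G(a_2,r)$. If $s < r$, then $G(a_1,r) > G(a_1,s) > G(a_2,s)$, where the first inequality drops $r - s$ positive terms and the second uses termwise comparison. In either subcase the asserted equality is impossible. This reduces the problem to the regime $a_1 < a_2$ and $s > r$.

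Now I would split on the relative position of the intervals $[a_1, a_1+r]$ and $[a_2, a_2+s]$. If $a_2 > a_1 + r$ the intervals are already disjoint, and Corollary \ref{co3.9} applies directly to yield a contradiction. Otherwise $a_1 < a_2 \le a_1 + r$, and Lemma \ref{lem1} converts the equality into $G(a_1, a_2 - a_1 - 1) = G(a_1 + r + 1, a_2 + s - a_1 - r - 1)$, whose two index intervals are disjoint by construction. Setting $a_1' = a_1$, $r' = a_2 - a_1 - 1$, $a_2' = a_1 + r + 1$, $s' = a_2 + s - a_1 - r - 1$, one checks $a_2' > a_1' + r'$ immediately, and $s' > r'$ reduces to $s > r$, which is already in force. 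Hence Corollary \ref{co3.9} applies to the reduced pair and again produces a contradiction, completing the proof.

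The main obstacle is purely administrative: verifying that the reduction via Lemma \ref{lem1} preserves the two hypotheses $a_2' > a_1' + r'$ and $s' > r'$ needed by Corollary \ref{co3.9}, without accidentally reversing the roles of the intervals. Once that bookkeeping is in place, the theorem is a two-line case split, since all the analytic effort (the Taylor expansion of $G$, the estimates on the remainders $R_1,\dots,R_7$, and the arithmetic constraint $a_2 \ge 4(s+1)^3$) has been absorbed into Corollary \ref{co3.9}.
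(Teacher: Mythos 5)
Your proposal is correct and follows essentially the same route as the paper: reduce via positivity/monotonicity to the case $a_1 < a_2$ and $s > r$, use Lemma~\ref{lem1} (as in Remark~\ref{re3.4}) to pass to disjoint intervals when they overlap, and then invoke Corollary~\ref{co3.9}. Your explicit bookkeeping check that the reduction preserves $a_2' > a_1' + r'$ and $s' > r'$ is exactly the (implicit) content of the paper's final assembly of Lemma~\ref{lem1}, Remark~\ref{re3.4}, and Corollary~\ref{co3.9}.
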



\begin{thebibliography}{99}
	\bibitem{YC}Y. Chen and M. Tang. \emph{On the elementary symmetric functions of $1, \frac{1}{2}, \cdot\cdot\cdot, \frac{1}{n}$} Amer. Math. Monthly 119(2012), 862-867.	
	\bibitem{ref2}P. Erd\"{o}s and I. Niven. \emph{Some properties of partial sums of
		the harmonic series} Bull. Amer. Math. Soc. 52(1946), 248–251.
	\bibitem{ref3}P. Erd\"{o}s.\emph{A theorem of Sylvester and Schur}  J. London Math. Soc. 9(1934), 282–288.
	
	\bibitem{YL} Y. Feng, S. Hong, X. Jiang and Q. Yin.  \emph{A generalization of a theorem of Nagell} Acta Math. Hungar. 157(2019), 522–536.
	
	
	\bibitem{TN} T. Nagell. \emph{Eine Eigenschaft gewissen Summen} 
	Skr. Norske Vid. Akad. Kristiania, 13(1923), 10-15.
	
	
	\bibitem{CW} C.Wang and S. Hong. \emph{On the integrality of the elementary symmetric functions of $1, \frac{1}{3}, \cdot\cdot\cdot,\frac{1}{2n-1}$} Math. Slovaca 65 (2015), 957–962.
	
	\bibitem{CW2} C.Wang and S. Hong. \emph{The elementary symmetric functions of reciprocals of elements of arithmetic progressions} Acta Math. Hungar. 144 (2014), 196–211.
	
	\bibitem{LT} L. Theisinger. \emph{Bemerkung \"{u}ber die harmonische Reihe} Monatsh. Math. Phys. 26 (1915), 132–134.
	\bibitem{HW}H. Wu, J. Sheng \emph{ A generalization of a theorem of Sylvester and Schur}, Ramanujan J. 58 (2022), 131-144.
	
	
	
	
	
	
	
	
	
	
	
\end{thebibliography}
\end{document}